\documentclass{amsart}

\title{The Coding Conception of Set}

\author[Chen]{Junhong Chen}
 \address[Junhong Chen]
         {School of Mathematical Science, Fudan University, 220 Handan Road, Shanghai, 200433 China}
\email{21300180086@m.fudan.edu.cn}

\usepackage{latexsym,appendix,amsfonts,amsmath,amssymb,mathrsfs,amsthm}
\usepackage{changepage}
\usepackage{csquotes}
\usepackage{braket}
\usepackage{bbm}

\newtheorem{theorem}{Theorem}

\begin{document}

\begin{abstract}

We propose the Coding Conception of ordinals and sets, which takes Cantor’s three generating principles as its sole foundation.
Bounded sets of ordinals are generated synchronously with the ordinals themselves through a bijective encoding function that, at each stage, selects only the finitely many bounded sets actually required by the successor, limit, and restriction principles.
This selective coding yields the first-order theory $\mathsf{SC}^{\mathrm{reg}}$, which we establish is the metamathematically correct theory of the ordinals: it is bi-interpretable with $\mathsf{ZFGC}^+$, yet makes no claim about the general concept of set.
Extending the conception to full set theory via a monadic second-order ordinal theory with arithmetic and class comprehension produces two mutually inconsistent first-order set theories according to distinct maximality intuitions: a Type-A universe $\mathsf{MC}_A$, in which the power set of every ordinal is a set and the universe satisfies $\mathsf{ZFC}$; and a Type-B universe $\mathsf{MC}_B^+$, in which sets are strictly more than ordinals and a “largeness cardinal” exists, beyond which power sets remain unencodable.
We prove that this Power Set Dichotomy is unavoidable, even under potentialism, and conclude that $\mathsf{ZFC}^-+$``every cardinal has a successor'' is the only philosophically uncontroversial common fragment of any true set theory; the status of the full power-set axiom remains the sole open philosophical choice point.

\end{abstract}

\maketitle

\tableofcontents

\section{Introduction}

The concept and mathematics of sets, particularly that of finite sets, can be traced back to the most ancient origins of combinatorics, appearing almost immediately after the concept and mathematics of natural numbers.
In contrast, the concept of ordinals, as a natural extension of natural numbers, was not recognized until the work of Cantor emerged.

In the course of proving the Cantor-Bendixson theorem, Cantor realized that a set of `symbols of infinity' was necessary for this proof, and in his famous \emph{Grundlagen} \cite{Cantor1883}, he ultimately established these symbols as genuine mathematical objects: a new kind of number.
We might distill the original intention and ultimate goal of Cantor's investigation of ordinals into the following proposition.

\textbf{Cantor's Thesis on ordinals.}The construction of every mathematical object can be completed within a transfinite construction process indexed by some ordinal.

Cantor articulated an intuitive and constructive manner to state how these ordinals are generated through the following three principles.

\textbf{The first generating principle.} Every ordinal has a successor.

\textbf{The second generating principle.} Given an increasing sequence of ordinals, there is always a new ordinal that is its supremum.

\textbf{The principle of restriction or limitation.} Given an ordinal, there is always a new ordinal that is the supremum of all ordinals equipollent with the previous ordinal. For convenience, we shall henceforth call this the third generating principle.

It is therefore necessary, before proceeding, to clarify the philosophical status of Cantor's Thesis on ordinals and to what extent it faithfully reflects Cantor's original standpoint.

Let us understand this Thesis through Cantor’s conception of \emph{free mathematics}\cite{Cantor1883b}.
As Cantor emphasizes, the objects of mathematics do not originate in sensory experience, nor are they constrained by any form of spatio-temporal intuition; rather, mathematics proceeds through the formation and articulation of concepts and the relations among them
What legitimizes a mathematical concept is therefore neither its empirical origin nor its realizability in any restricted constructive sense, but its capacity to integrate coherently into the existing conceptual framework.
Once a concept is introduced in such a way that it stands in determinate relations to previously accepted concepts, it acquires what Cantor calls \emph{intrasubjective reality}, and thereby qualifies as a genuine mathematical object.
Then, due to the connectedness of the two kinds of reality, the concept would also take \emph{extrasubjective reality} into account, and we can say that the mathematical concept actually captures a mathematical object, or that it \emph{exists}.
A historical study on this topic can be found in \cite{Hallett1984}, pp.15-18.

From this perspective, mathematical construction should not be understood in the narrow sense of a step-by-step executable procedure, as in intuitionistic or constructivist frameworks.
As Cantor himself says (\cite{Cantor1883b}, p. 206, n. 6), what he proposes is a Platonic principle: the ‘creation’ of a consistent coherent concept in the human mind is actually the uncovering or discovering of a permanently and independently existing real abstract idea.
Rather, it is a \emph{free conceptual construction} governed by conditions of coherence.
The introduction of new objects is justified precisely insofar as they can be systematically incorporated into an already established network of concepts.
In this sense, Cantor’s framework is fundamentally non-constructivist, while nevertheless retaining a strong structural discipline.

We'd better keep a distance from formalism here, as the term ``coherency'' sounds like ``consistency''.
Hallett points out an argument immediately in \cite{Hallett1984}, p. 19.
\begin{quote}
    Cantor was not just or indeed very seriously occupied with the non-contradictoriness of concepts.
    (Certainly he was not concerned with formal consistency, since he had no idea of a formally presented theory.)
    Rather, Cantor’s stress on a new concept’s having ‘orderly relations to existing concepts’ is much better described as a notion of coherence, of the coherent integration into the existing conceptual framework rather than of mere non-contradiction.
    Moreover, while he mentions non-contradiction, Cantor deals with it only in so far as he attempts to rebut suggestions that the very notion of infinite number is self-contradictory.
    Outside of these negative rebuttals, which are, of course, a long way from a consistency programme, Cantor concerns himself entirely with what I call here coherence.
    We know from the development of Hilbert’s programme that to make an explicit demonstration of consistency a necessary requirement for the acceptance of a mathematical theory may be a very strong demand indeed, certainly if one insists on absolute consistency.
    But in practice mathematics has concentrated much more on something like Cantor’s coherence, the plausible extension and generalization of theories already reasonably well established.
    And in this sense it has been rather free.
\end{quote}

So, we may conclude the first part of our thesis here.

\textbf{Cantor's Thesis on Ordinals, Part A.}Every mathematical object can be completely captured within some kind of construction.

It is within this setting that the role of ordinal numbers becomes conceptually transparent.
Historically, their emergence is tied to Cantor’s analysis of transfinite iteration processes, most prominently in the Cantor-Bendixson procedure, where repeated application of the derived set operation gives rise to a hierarchy of stages extending beyond the finite.
What is crucial here is that these stages themselves form a well-ordered system, determined by a \emph{lawlike succession} (\emph{gesetzmäßige Sukzession}), which makes possible a definite counting (\emph{Abzählung}) of the process\cite{Cantor1883b}.
In this way, ordinals arise not merely as abstract objects, but as the natural indices of a structured progression of construction.

This connection is further reinforced by Cantor’s abstractionist account of order-types.
Given any simply ordered set, one may abstract from the material properties of its elements and retain only its order-structure, thereby obtaining an ``ideal paradigm'' of the set——its order type——which is itself a well-ordered system isomorphic to the original\cite{Cantor1895}.
In this sense, ordinals may be understood as canonical representatives of such order-types, functioning as idealized positions within any well-ordered progression.
Their role is thus intrinsically tied to the possibility of representing, comparing, and organizing stages of a construction process.

Consequently, the theory of ordinals embodies a general principle: mathematical objects arise through processes that can, in principle, be organized along transfinite scales, and ordinals provide the universal framework for measuring and structuring such processes.
Cantor’s three generating principles—successor, limit, and restriction—supply exactly the structural conditions required to sustain this hierarchy.
A similar study can be found at \cite{Hallett1984}, pp.142-164.
The Thesis proposed above may therefore be seen not as an external imposition, but as an explicit articulation of what is already implicit in Cantor’s framework: ordinals function as the canonical system for indexing and measuring the stages of free transfinite construction.
So, we may conclude the second part of our thesis here.
The whole thesis is thus taken as the conjunction of these two parts.

\textbf{Cantor's Thesis on Ordinals, Part B.}Every construction process can be enumerated, or counted, along some ordinal.

However, constrained by historical circumstances, Cantor's theory was not perfect.
The foremost issue was the ontological problem, which asks what ordinals truly are, since they are not as readily accepted by general intuition as natural numbers are.
Although Cantor had already studied the isomorphism classes of linearly ordered sets, i.e., the order types of linearly ordered sets, in his \emph{Beitrag} \cite{Cantor1895}, the first person to reduce ordinals to sets via the concept of order types was Russell in his \cite{Russell1903}.
There, he defined an ordinal as the order type determined by any well-ordered set.

While this provided a tentative answer to the ontological problem of ordinals, its shortcomings became apparent after the $\mathsf{ZF}$ set theory was widely considered: the equivalence class of a well-ordered set is a proper class, and the collection of all ordinals forms a proper class, far beyond the scope that axiomatic set theory is willing to entertain.

Thanks to the work of von Neumann, we now define ordinals as the most special representative of the equivalence class of well-ordered sets, namely, a transitive set on which the membership relation forms a well-order.
We can now speak of the class of all ordinals just as we speak of the class of all sets.
Cantor's three generating principles have now become internal theorems of $\mathsf{ZF}$: the first generating principle is trivial; the second generating principle, depending on a broader understanding of the term ``given,'' is best regarded as the axiom schema of replacement restricted to the class of ordinals; the third generating principle, which asserts that a cardinal successor always exists, is a significant corollary of the axiom of power set.

Nowadays, $\mathsf{ZFC}$ set theory is widely regarded as sufficient to interpret almost all modern mathematical theorems.
This means that when we claim that proving a mathematical theorem is equivalent to proving a theorem within $\mathsf{ZFC}$, we are unlikely to face technical objections.
Of course, nearly every philosophical viewpoint abhors this idea, but this observation at least allows us to provide a concrete equivalent for the broad notion of ``general mathematical proof / general mathematical object'': a set / a proof within $\mathsf{ZFC}$ set theory.

Thus, the cumulative hierarchy provides a demonstration of Cantor's Ordinal Thesis: every mathematical object / every set is generated within the transfinite construction process $\alpha \mapsto V_{\alpha}$ indexed by its $\operatorname{rank}$.
In fact, it strengthens Cantor's Ordinal Thesis, and we might rename it accordingly.

\textbf{von Neumann's Thesis on ordinals.}The construction of every mathematical object can be uniformly completed within a single transfinite construction process indexed by some ordinal.

This version of thesis on ordinals is obviously equivalent to his famous conception of set, which is often refered to as the Limitation of Size conception.
It suggest that, stronger than the axiom of choice, a set-like global well order exists, or a function between the class of all sets and the class of all ordinals exists.

At this point, two influential conceptions of set begin to diverge in their philosophical significance.

On the one hand, the cumulative hierarchy provides the standard implementation of Cantor's ordinal-based picture of set formation.
It tells us that every set appears at some level $V_\alpha$, and thus that the universe of sets can be generated through a transfinite process indexed by the ordinals.
However, this picture does not fully satisfy the demand behind Cantor's thesis.
What it gives us is, at best, a superstructure: every set is located within some $V_\alpha$, but this only tells us that the set belongs to a certain stage, not what the set is in itself.
In this sense, the iterative conception risks introducing a form of non-predicativity through the power set operation: we first obtain a larger totality and then turn back to examine its elements.
One might also express the concern in metaphysical terms: the dependence relation appears to run from a set to its supersets, rather than the other way around.
For this reason, the iterative conception, while structurally clear, does not seem to capture the explanatory core of Cantor's Thesis on ordinals.

On the other hand, von Neumann's conception, often associated with the limitation of size doctrine, suffers from a different philosophical difficulty.
It does not explain what a set is; it only tells us when something is small enough to count as one.
But this criterion is entirely external: it measures a collection against the totality of all sets or ordinals, and thereby presupposes exactly the universe whose structure it is supposed to illuminate.
In this sense, limitation of size offers not an account of sethood, but a global screening condition for sethood.
Set existence becomes dependent on a prior grasp of the whole, while the whole itself is invoked as though it were already in place.
What is left is a negative characterization of sets, not a positive explanation of their nature.
This criticism is widely known, as in Chapter 5 of \cite{Incurvati2020}.

This worry can be sharpened further.
Limitation of size also seems to undermine the positive maximality of the set-theoretic universe if we consider ordinals as a different concept from sets that could be encoded as special sets.
For if we use a comparatively well-behaved proper class such as $\mathbf{Ord}$ to regulate the size of the universe, then the resulting conception appears to constrain the universe rather than allowing it to expand.
The universe is no longer simply the fullest domain of sets, but a domain whose extension is determined by a size condition imposed from above.
This makes the conception philosophically uneasy: instead of preserving the maximality of the universe, it seems to replace that maximality with an externally imposed boundary.

All these criticism are based on our Cantor-G\"odel Platonism standpoint, as described in \cite{Ternullo2018}.
For instance, our resistance to predicativism here can be traced back entirely to G\"odel's discussion in \cite{Godel1944}.

We are thus led to a tension. The iterative conception provides a canonical generative picture, but fails to explain the nature of the objects it generates; the limitation of size conception, by contrast, replaces explanation with an external size test and threatens the maximality of the universe itself.
Neither framework, in its standard form, seems to provide a fully satisfactory account of how sets arise from the transfinite progression of the ordinals.

Accordingly, we are led to reconsider how such conceptions should be formalized.
The aim is to retain the generative role of ordinals while avoiding both the explanatory insufficiency of the cumulative hierarchy and the philosophical difficulties of the limitation of size.
In what follows, we develop an alternative approach along these lines and show how it gives rise to two natural first-order theories of sets.

\section{Coding Conception on the Theory of Ordinals}

We will show how $\mathsf{ZFGC}$ set theory arises from a process that describes how ordinals extend “as far as possible”, as $\mathsf{ZFC}$ is obviously a reduct of this theory.
However, nothing will be claimed here about the general concept of sets.

In our earlier discussion, a simple observation has already emerged: if we want to speak about the process by which ordinals are generated, we must also be able to speak about the bounded sets of ordinals that occur in that process.
For example, the second generation principle requires a strictly increasing sequence of ordinals as input in order to form its supremum; the third generation principle requires us to determine when one ordinal, regarded as a bounded set of ordinals, can be matched by a bijection with another.
Accordingly, any formal system intended to describe how ordinals extend as far as possible must contain, in addition to a sort for ordinals, a second sort for bounded sets of ordinals, together with axioms governing the interaction between the two sorts.

By contrast, unbounded sets of ordinals, or more naturally, proper classes, are not needed for the present purpose.
At no stage of the generation process do we require such large objects themselves; we only ever need their initial segments below some ordinal.
This is why the natural background logic here is a two-sorted first-order language, with one sort for ordinals and one sort for bounded sets of ordinals, linked by the membership relation $\in$.

However, if we are giving a unified transfinite account of the generation of ordinals, in which each ordinal is constructed at the stage it itself denotes, should bounded sets of ordinals not also be generated in a corresponding way?
The point is that at each stage of ordinal generation we need to refer only to one bounded sets of ordinals, provided with any possible binary encoding function that encodes a pair of ordinals as a single one(so that a function from ordinals to ordinals can be encoded as a single bounded set of ordinals, instead of a collection of ordered pairs).

For the first generating principle, the successor of an ordinal is generated from the bounded set consisting of that ordinal alone; no further coding is needed.  
The second generating principle requires, at a limit stage, an increasing cofinal sequence of ordinals; such a sequence is itself a bounded set, and the limit ordinal is generated from it.  
The third generating principle (the restriction principle) demands more: to obtain the cardinal successor \(\kappa^+\) one must appeal to a sequence that encodes, for each ordinal \(\alpha\) with \(\kappa<\alpha<\kappa^+\), a well‑ordering of \(\kappa\) of order‑type \(\alpha\). Such a sequence can be coded as a single bounded set of ordinals once a pairing function is available—a function that encodes a pair of ordinals as a single ordinal. The Gödel pairing function provides such a coding, and it is definable from ordinal addition and multiplication. Hence the most natural treatment is to incorporate ordinal addition and multiplication directly, as we shall do in Section 3. But here we could just hang the problem alone, since our theory will give us a coding function naturally as a consequence.

This suggests that bounded sets of ordinals should be generated in tandem with the ordinals themselves.
That requires a bijective function assigning to each ordinal a bounded set of ordinals since we need only construct those bounded sets that are relevant to the process of ordinal extension.
This natural and economical restriction on the sort of bounded sets is essential and useful, and we may come back to this idea later.
From this perspective, the inverse correspondence assigns to each bounded set of ordinals in the domain an ordinal marking the stage at which that set is constructed.

Taken as a whole, this correspondence can be understood as a coding of bounded sets of ordinals, much as G\"odel's $\beta$-function codes bounded sets of natural numbers by single natural numbers.
The central idea is therefore this: bounded sets of ordinals are not primitive objects standing apart from the ordinal hierarchy, but are generated and organized together with that hierarchy itself.
We therefore turn to the function from bounded sets of ordinals to ordinals that expresses this relation, and call it the encoding function.

\subsection{The theory $\mathsf{SC}$}

We now proceed to construct a formal theory that captures this conception.

Consider a two-sorted first-order language. The variables of the first sort, called \emph{ordinals}, are denoted by lowercase Greek letters $\alpha,\beta,\gamma,\dots$; the variables of the second sort, called \emph{bounded sets of ordinals}, are denoted by lowercase Latin letters $x,y,z,\dots$.
The language contains a binary relation symbol $<$ for the ordering between ordinals, a binary relation symbol $\in$ for the membership of an ordinal in a bounded set of ordinals, and a unary function symbol $e$.
The function $e$ maps bounded sets to ordinals, and it will be called the \emph{encoding function}. The theory consisting of the following axioms is denoted $\mathsf{SC}^{--}$.

\textbf{Well‑ordering.} The relation $<$ is a linear order:
$$\forall\alpha,\beta\;(\alpha<\beta\;\lor\;\beta<\alpha\;\lor\;\alpha=\beta)$$
$$\forall\alpha,\beta\;(\alpha<\beta\to\alpha\neq\beta)$$
$$\forall\alpha,\beta,\gamma\;(\alpha<\beta\;\land\;\beta<\gamma\to\alpha<\gamma)$$
and every non‑empty bounded set of ordinals has a least element:
$$\forall x\,\bigl(\exists\alpha\in x\to\exists\alpha\in x\;\forall\beta\in x\;(\neg(\beta<\alpha))\bigr)$$

\textbf{Replacement Schema.} For every formula $\varphi(\gamma,\delta)$ (possibly with parameters), we have a replacement axiom:
$$\forall\alpha\,\Bigl(\forall\gamma<\alpha\;\exists!\delta\;\varphi(\gamma,\delta)\;\to\;\exists\beta\;\forall\gamma<\alpha\;\exists\delta<\beta\;\varphi(\gamma,\delta)\Bigr)$$

\textbf{Extensionality and Boundedness.} Bounded sets of ordinals are extensional and bounded:
$$\forall x\forall y\,\bigl(\forall\alpha\,(\alpha\in x\leftrightarrow\alpha\in y)\to x=y\bigr),\qquad
\forall x\exists \alpha\forall\beta\in x(\beta<\alpha)$$

\textbf{Bounded Comprehension Schema.}
For every formula $\varphi(\beta)$ (possibly with parameters), we have a bounded comprehension axiom:
$$\forall\alpha\;\exists x\;\forall\beta<\alpha\;\bigl(\beta\in x\leftrightarrow\varphi(\beta)\bigr)$$

\textbf{Encoding.} The function $e$ is a bijection:
$$\forall \alpha\exists x(\alpha=e(x)),\qquad
\forall x\forall y\;(e(x)=e(y)\to x=y).$$

\textbf{Order Preservation.} The encoding function respects the order:
$$\forall\alpha\forall x\;(\alpha\in x\to\alpha<e(x))$$

Adding the following axiom gives $\mathsf{SC}^-$.
If we add its negation, the extended theory will be denoted by $\mathsf{SC}^{\text{fin}}$.

\textbf{Infinity.} $\omega$ exists:
$$\exists \alpha>0\forall\beta<\alpha\exists\gamma<\alpha(\beta<\gamma)$$

The theorem below is immediate.

\begin{theorem}
Define $\alpha E\beta$ in $\mathsf{SC}^{-}$ as $\exists x(\alpha\in x\wedge \beta=e(x))$. Then, the structure consisting of all ordinals together with the relation $E$ is a model of $\mathsf{ZFC}^-$, obtained by omitting the power set axiom from $\mathsf{ZFC}$ and equipping the Collection Schema instead of the Replacement Schema.
\cite{Gitman2016}
In fact, if we also take $<$ into consideration, we get a model of $\mathsf{ZFGC}^-$, where additional axioms about $<$ state that $<$ plays the role of a set-like global well-ordering that respects $<$.
\end{theorem}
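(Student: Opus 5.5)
We will verify the axioms of $\mathsf{ZFC}^-$ directly in the structure $(\mathbf{Ord},E)$. The guiding observation is that $e$ is a bijection between bounded sets and ordinals. So each ordinal $\beta$ has an \emph{extension} $x_\beta=e^{-1}(\beta)$, and $\alpha E\beta$ holds iff $\alpha\in x_\beta$. Consequently, any $E$-formula translates into a formula of the two-sorted language, and conversely every bounded set $x$ appears as the $E$-extension of the ordinal $e(x)$. All the axioms will be checked through this dictionary.

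\textbf{The easy axioms.}
\begin{itemize}
\item \emph{Extensionality:} if $\beta,\beta'$ have the same $E$-predecessors, then $x_\beta=x_{\beta'}$ by extensionality of bounded sets, so $\beta=\beta'$ because $e$ is a function.
\item \emph{Foundation:} Order Preservation gives $\alpha E\beta\Rightarrow\alpha<\beta$. Hence every nonempty $x_\beta$ has an $E$-minimal element, namely its $<$-least element, which exists by the Well-ordering axiom.
\item \emph{Separation:} given $\beta$ and a formula $\varphi$ (translated), take a bound $\alpha>\sup x_\beta$. Bounded Comprehension yields $y$ with $\gamma\in y\leftrightarrow(\gamma\in x_\beta\wedge\varphi)$ for $\gamma<\alpha$; intersect with $x_\beta$ by a second comprehension to make $y$ a genuine subset. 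Then $e(y)$ is the required set.
\item \emph{Pairing and Union:} these are instances of comprehension below a suitable bound. For union, the bound is $\beta$ itself, since every element of an element of $\beta$ lies below $\beta$.
\item \emph{Infinity:} the Infinity axiom gives a limit ordinal $\omega$. We then form, by recursion along $<$, the $E$-von Neumann naturals. The map $n\mapsto$ (code of $n$) is definable, so Replacement bounds its values below $\omega$, and Comprehension collects them into a single code.
\end{itemize}

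\textbf{Collection.} Suppose $\forall u\,E\,\beta\;\exists v\,\varphi(u,v)$. For each $u<\beta$ with $u E\beta$, let $\delta(u)$ be the $<$-least $v$ with $\varphi(u,v)$, and set $\delta(u)=0$ otherwise. This function is definable and total on $\beta$, so the Replacement Schema yields a bound $\beta'$ on all the values. Comprehension below $\beta'$ then gives a bounded set $z$ containing all those witnesses, and $e(z)$ witnesses Collection. Since $<$ is definable and set-like, taking least witnesses gives full Replacement as well.

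\textbf{Choice and the global well-order.} The relation $<$ well-orders the universe: it is linear, and it is well-founded on each $x_\beta$. It is also set-like, since $\{\gamma:\gamma<\beta\}$ is coded by Comprehension below $\beta$. It respects $E$ because $\alpha E\beta\to\alpha<\beta$. So $<$ is a definable set-like global well-order, which yields $\mathsf{ZFGC}^-$ and in particular AC. For a well-ordering theorem inside $E$, one codes ordered pairs via $E$-Kuratowski pairs and applies Comprehension.

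\textbf{Main obstacle.} The delicate point is internalizing $E$-notions, such as Kuratowski pairs and $E$-ordinals, as bounded sets. Every comprehension must be carried out below an explicit bound, and these bounds come from Order Preservation and Replacement. A further issue is that $\mathsf{ZFC}^-$ should be taken with Collection and with the well-ordering principle rather than bare AC; the global well-order $<$ handles this directly.
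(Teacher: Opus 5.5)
Your proposal is correct and follows the paper's proof on essentially every axiom. Extensionality comes from extensionality of bounded sets plus injectivity of $e$. Foundation comes from Order Preservation and the $<$-least element of $e^{-1}(\beta)$. Pairing, union and separation come from Bounded Comprehension. Collection comes from passing to $<$-least witnesses and then applying the $\mathsf{SC}^-$ Replacement schema, which is exactly the paper's auxiliary theorem. The global well-order is $<$ itself.

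The one real divergence is Infinity, and there your route is the better one. The paper takes the bounded set of all $<$-predecessors of the least limit ordinal and declares its code to be the set-theoretic $\omega$. Nothing forces the $<$-finite ordinals to be closed under $E$-successor. Here is a counterexample. Take $L_{\omega_1}$ with the following set-like global well-order: first list, by rank, the hereditarily finite $x$ with $\{\emptyset,\{\emptyset\}\}\notin\operatorname{TCl}(\{x\})$; then the remaining hereditarily finite sets, by rank; then everything else in $<_L$-order. Via the identity encoding, as in the paper's bi-interpretation theorem, this is a model of $\mathsf{SC}^-$ with $e(\{0\})=1$ but $e(\{0,1\})=\omega$. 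So the code of $\{n : n<\omega\}$ is not inductive in $(\mathbf{Ord},E)$. Your construction avoids this: you build the $E$-numerals $0_E=0$, $(n+1)_E=e(e^{-1}(n_E)\cup\{n_E\})$ by recursion along $<$ and collect them.

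Two small corrections to your write-up.

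\emph{Infinity.} The bound that Replacement (with domain $\omega$) gives for $n\mapsto n_E$ is in general not below $\omega$; in the example above $2_E=\omega$. So the final comprehension must be taken below that bound $\beta$, not below $\omega$. The recursion itself is routine: code finite sequences as bounded sets of $E$-Kuratowski pairs, and use the least-number principle for definable classes. That principle follows from Comprehension plus the well-ordering of bounded sets, exactly as in the paper's proof of the Collection form of Replacement.

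\emph{Separation.} The ``second comprehension'' step adds nothing. Read literally, the schema only controls the set below the chosen bound, and a second application has the same defect. Read as the paper tacitly intends, as producing exactly $\{\gamma<\alpha:\varphi(\gamma)\}$, one application already suffices.
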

\begin{proof}
Suppose $(Ord,Set^*;<,\in,e)\vDash\mathsf{SC}^-$; we'll prove that $(Ord,E,<)\vDash\mathsf{ZFGC}^-$.
To prove extensionality, notice that $\forall\gamma(\gamma E\alpha\leftrightarrow\gamma E\beta)$ implies $\exists x,y(\forall\gamma(\gamma\in x\leftrightarrow \gamma\in y)\wedge e(x)=\alpha\wedge e(y)=\beta)$; the extensionality of bounded sets of ordinals shows that $x=y$, so $\alpha=\beta=e(x)=e(y)$.
Notice that order preservation proves $\alpha E\beta\to\alpha<\beta$; the axiom of pairing, union, and the separation schema can also be derived easily from the bounded comprehension schema.
Since $<$ is a well-ordering, for every $\alpha=e(x)$, we have a $<$ minimum $\beta\in x$, which also plays the role of the $\in^*$ minimal element of $\alpha$, proving the axiom of foundation.
Notice that we didn't use the replacement schema and the axiom of infinity in the previous proof.
The replacement schema in $\mathsf{ZFGC}^-$ is just an immediate consequence of the replacement schema in $\mathsf{SC}^-$.
As for the axiom of infinity in $\mathsf{ZFGC}^-$, first we can get a least infinite limit ordinal from the axiom of infinity in $\mathsf{SC}^-$; now we have a bounded set consists of all finite ordinals from the bounded comprehension schema, and the encoding of this set will be interpreted as the set of all natural numbers in set theory.

However, we want Collection in $\mathsf{ZFGC}^-$ instead of Replacement, so let's prove a stronger result.
\begin{theorem}
For every formula $\varphi(\gamma,\delta)$ (possibly with parameters),
$$\mathsf{SC}^-\vdash\forall\alpha\,\Bigl(\forall\gamma<\alpha\;\exists\delta\;\varphi(\gamma,\delta)\;\to\;\exists\beta\;\forall\gamma<\alpha\;\exists\delta<\beta\;\varphi(\gamma,\delta)\Bigr).$$
\end{theorem}
\begin{proof}
    Let $\psi(\gamma,\delta)=\varphi(\gamma,\delta)\land\forall\delta'<\delta(\neg\varphi(\gamma,\delta'))$; we'll show that $\forall\gamma<\alpha\exists !\delta(\psi(\gamma,\delta))$ from $\forall\gamma<\alpha\exists\delta(\varphi(\gamma,\delta))$.
    Given $\gamma<\alpha$, suppose $\delta_0$ is an ordinal such that $\varphi(\gamma,\delta_0)$ holds; by bounded comprehension, we may form $x=\{\delta<\delta_0\mid\varphi(\gamma,\delta)\}$.
    If $x=\varnothing$, $\delta=\delta_0$ is what we need; otherwise, pick the $<$ minimum element in $x$ as our $\delta$.
\end{proof}
The rest of this proof is obvious.
\end{proof}
\begin{theorem}
$\mathsf{SC}^-$ is in bi-interpretation with $\mathsf{ZFGC}^-$.
\end{theorem}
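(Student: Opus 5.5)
We need interpretations both ways and definable isomorphisms witnessing that the composites are identities.

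\emph{From $\mathsf{SC}^-$ to $\mathsf{ZFGC}^-$.} This direction is the preceding theorem: in a model $(Ord,Set^*;<,\in,e)$ of $\mathsf{SC}^-$ the structure $(Ord,E,<)$ satisfies $\mathsf{ZFGC}^-$, where $\alpha E\beta$ iff $\exists x(\alpha\in x\wedge\beta=e(x))$. Call this interpretation $I$.

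\emph{From $\mathsf{ZFGC}^-$ to $\mathsf{SC}^-$.} Work in $(V,\in,\prec)\models\mathsf{ZFGC}^-$, where $\prec$ is the set-like global well-order and, as in the previous theorem, $a\in b$ implies $a\prec b$.
\begin{itemize}
\item Interpret the ordinal sort as the von Neumann ordinals, with $<$ as $\in$.
\item Interpret the bounded-set sort as the bounded subsets of $\mathbf{Ord}$, which are sets by Separation, with membership as $\in$.
\item Interpret $e(x)$ as the $\prec$-rank of $x$, namely the order type of $\{y: y\prec x\}$; this is a set because $\prec$ is set-like, and it is well-ordered with order type given by Replacement and Mostowski collapse.
\end{itemize}
Call this interpretation $J$.

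To verify the axioms under $J$, well-ordering, extensionality, boundedness and bounded comprehension (from Separation) are routine, and Replacement transfers directly. For the encoding axiom, the rank map $\rho:V\to\mathbf{Ord}$ is injective and its image is an initial segment of $\mathbf{Ord}$. Since $V$ is a proper class, Replacement shows that $\rho$ is onto $\mathbf{Ord}$. The difficulty is that $e$ must be a bijection from \emph{bounded sets of ordinals} onto $\mathbf{Ord}$, not from all of $V$. So I would instead define $e$ as the $\prec$-rank within the class $B$ of bounded subsets of $\mathbf{Ord}$. This is still a bijection onto $\mathbf{Ord}$ because $B$ is a proper class with each initial segment a set. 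Order preservation then needs: if $\alpha\in x$, then $\alpha<e(x)$.

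\textbf{This is the main obstacle.} The $\prec$-rank of $x$ within $B$ need not exceed the ordinal $\alpha$, because $\prec$ restricted to $B$ has nothing to do with the ordinals. I would try to repair this by redefining the interpretation through a definable bijection $F:V\to B$ transported along $\rho$. Define $F$ by $\prec$-recursion, $F(a)=\{\rho(b): b\in a\}$, and set $e(F(a))=\rho(a)$. Then $\alpha\in F(a)$ means $\alpha=\rho(b)$ for some $b\in a$, and $b\prec a$ gives $\rho(b)<\rho(a)$, so order preservation holds. Each value $F(a)$ is bounded by $\rho(a)$. $F$ is injective by extensionality in $V$. $F$ is onto $B$ because every bounded $x\subseteq\mathbf{Ord}$ equals $F(\rho^{-1}[x])$, using that $\rho$ is a bijection $V\to\mathbf{Ord}$. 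So in $J$ the bounded-set sort is really $V$ itself, with $\alpha\in a$ read as $\rho^{-1}(\alpha)\in a$ and $e=\rho$, and the ordinal sort is $\mathbf{Ord}$ or equivalently $V$ ordered by $\prec$. Infinity comes from $\omega$.

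\emph{The composites.} For $J\circ I$, starting from $\mathsf{ZFGC}^-$, the $I$-universe is $V$ with $a\,E\,b$ iff $a\in b$ after identifying $V$ with $\mathbf{Ord}$ via $\rho$, so $\rho$ itself is the definable isomorphism. For $I\circ J$, starting from $\mathsf{SC}^-$, the $J$-ordinals are the $E$-transitive $E$-well-ordered elements. Map each $\mathsf{SC}$-ordinal $\alpha$ to the unique $E$-von Neumann ordinal $o(\alpha)$ of the same $<$-order type, by recursion on $<$ using replacement. Map each bounded set $x$ to $e(x)$. The remaining check is that $e$ and the interpreted $e$ agree under these maps: the interpreted $e$ is the $<$-rank, and in an $\mathsf{SC}^-$ model the $<$-rank of $\beta$ is $o(\beta)$. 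The isomorphism equations should then follow by $<$-induction, and this induction is what I expect to take the most care.
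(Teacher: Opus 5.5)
Your proof is correct, but it takes a longer route than the paper. The paper interprets $\mathsf{SC}^-$ in $\mathsf{ZFGC}^-$ by taking \emph{both} sorts to be $V$ itself: the ordinal sort is $V$ ordered by the global well-order $<$, the bounded-set sort is $V$ with $\in$, and $e=\operatorname{id}$. With these choices the axioms mostly verify themselves. Order preservation is exactly the hypothesis that $<$ extends $\in$. Boundedness is witnessed by $x$ itself. Bounded comprehension is Separation applied to the set $\operatorname{Ext}_<(x)$. The only axiom needing an argument is Replacement: set-likeness makes $\operatorname{Ext}_<(x)$ a set, and $w\in y\to w<y$ turns the $\in$-bound into a $<$-bound. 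The round trips are then trivial. From $\mathsf{ZFGC}^-$ you recover $(V,\in,<)$ exactly. From $\mathsf{SC}^-$ the isomorphism is the identity on ordinals and $e$ on bounded sets, since $\alpha\,E\,e(x)\leftrightarrow\alpha\in x$.

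Your final interpretation takes the ordinal sort to be $\mathbf{Ord}$ and the bounded-set sort to be $V$, with $\alpha$ belonging to $a$ iff $\rho^{-1}(\alpha)\in a$, and $e=\rho$. This is exactly the paper's interpretation transported along the rank map $\rho$. Insisting that the ordinal sort be the von Neumann ordinals is what forces the extra work. You must show $\rho$ is a bijection onto $\mathbf{Ord}$, using order types via Replacement and surjectivity via $V$ being a proper class. You must also build $o$ by recursion for the $\mathsf{SC}^-$-side round trip. What you gain is that the interpreted ordinals are genuine ordinals ordered by $\in$. The paper's choice gains an almost verification-free proof.

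Two small remarks. First, $F(a)=\rho[a]$ is a direct definition, not a recursion. Second, your composite map $x\mapsto e(x)$ is right only for the presentation whose bounded-set sort is $V$. If you use the presentation with sort $B$ instead, $x$ must go to the $E$-code of $o[x]$.
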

\begin{proof}
We'll show that, after giving $(V,\in,<)\vDash\mathsf{ZFGC}^-$, the identity function $\operatorname{id}:V\to V$ is just enough for having $(V,V;<,\in,\operatorname{id})\vDash\mathsf{SC}^-$.
Since most axioms can be checked directly, we'll focus on the replacement schema.
Given $x\in V$, the set-likeness of $<$ proves $\operatorname{Ext}_<(x)=\{y\in V\mid y<x\}\in V$.
Now, if $\forall z<x\exists! w(\varphi(z,w))$ we have $\forall z\in\operatorname{Ext}_<(x)\exists! w(\varphi(z,w))$; so the replacement schema in $\mathsf{ZFGC}^-$ proves $\exists y\forall z\in\operatorname{Ext}_<(x)\exists w\in y(\varphi(z,w))$, and $w\in y\to w<y$ shows $\forall z<x\exists w<y(\varphi(z,w))$.
\end{proof}

$\mathsf{SC}^{\text{fin}}$ is of little interest to us because it is not difficult to show by the same argument that it amounts to $\mathsf{PA}$ equipped with a coding function for finite sets of natural numbers; however, since we have a computable coding function for finite sets of natural numbers, it is mutually interpretable with $\mathsf{PA}$.

When we speak of a \emph{set}, we mean an ordinal $\alpha$ equipped with the relation $E$ on it; when we need to regard $\mathsf{SC}^-$ and its various strengthenings as set theories, we directly use lowercase Latin letters $x, y, z, \dots$ as variable symbols.
We shall call those ordinals that remain ordinals when viewed as sets \emph{set ordinals}.
There is an obvious isomorphism that sends each ordinal $\alpha$ to the corresponding set ordinal $\check{\alpha}$; in $\mathsf{ZFGC}^-$, the existence of this function is precisely equivalent to a bijection $o: \mathbf{V} \to \mathbf{Ord}$ with the property that $o(x) = \operatorname{otp}_{<} \{ y \mid y < x \}$ for some global well-ordering $<$.
In fact, what we discuss in the set-theoretic sense as $x \mapsto \operatorname{Ext}_{<}(x) = \{ y \mid y < x \}$ is exactly the composite map $\alpha \mapsto \operatorname{Ext}_{<}(\alpha) \mapsto e(\operatorname{Ext}_{<}(\alpha))$ obtained in $\mathsf{SC}^-$, where $\operatorname{Ext}_{<}(\alpha)$ is precisely that bounded set of ordinals formed by all ordinals $\beta < \alpha$ in $\mathsf{SC}^-$.

A global well-ordering is powerful: although $\mathsf{ZFC}^-$ using Replacement cannot prove $\mathsf{ZFC}^-$ using Collection, the above proof shows that $\mathsf{ZFGC}^-$ using Replacement and $\mathsf{ZFGC}^-$ using Collection are the same theory; in fact, we can even prove the Reflection Principle.

\begin{theorem}
    $\mathsf{ZFGC}^-$ proves that, for every formula $\varphi(\vec{x},\vec{p})$ with parameters, there is a transitive set $t$ such that $\vec{p}\subseteq t$, and for every $\vec{x}\subseteq t$, we have $\varphi(\vec{x},\vec{p})\leftrightarrow\varphi^t(\vec{x},\vec{p})$.
\end{theorem}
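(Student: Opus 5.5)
We prove the Reflection Principle in $\mathsf{ZFGC}^-$ by the usual Lévy--Montague scheme, but we replace the cumulative hierarchy $V_\alpha$ (unavailable without Power Set) with the initial segments of the global well-ordering. For each $a$, let $\operatorname{Ext}_<(a)=\{y\mid y<a\}$. This is a set by set-likeness, and the discussion after the bi-interpretation theorem identifies it with an ordinal stage. Since $y\in x$ implies $y<x$, every $\operatorname{Ext}_<(a)$ is transitive. These segments increase with $a$, and every set lies in one of them, so they give a definable increasing filtration of $\mathbf V$ by transitive sets. The plan is to find a single $a$ such that $\operatorname{Ext}_<(a)$ is closed under Skolem witnesses for all subformulas of $\varphi$, and then to apply the Tarski--Vaught criterion.

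First we may assume $\varphi$ is written using only $\neg,\wedge,\exists$. Let $\varphi_1,\dots,\varphi_n$ list its subformulas. For each subformula of the form $\exists y\,\psi(y,\vec u)$ define
\[
F_i(\vec u)=\text{the }<\text{-least } y\text{ with }\psi(y,\vec u)\text{ if one exists, and }\varnothing\text{ otherwise}.
\]
The global well-order makes each $F_i$ definable without any Choice over classes. Next, use the Collection form of the replacement schema proved above (the strengthened replacement theorem for $\mathsf{SC}^-$, transported through the bi-interpretation) to show that for every $a$ there is a $b>a$ such that $F_i(\vec u)<b$ for all $\vec u\in\operatorname{Ext}_<(a)^{<\omega}$ and all $i$. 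For this, finite tuples from a set must form a set, which follows from pairing, union and Infinity by the standard $\mathsf{ZFC}^-$ argument. Let $G(a)$ be the $<$-least such $b$, chosen above the parameters $\vec p$ as well. Iterating $G$ along $\omega$ by recursion (recursion along $\omega$ uses Replacement only), we get $a_0<a_1<\cdots$ with $a_{k+1}=G(a_k)$. Put $t=\bigcup_k\operatorname{Ext}_<(a_k)$. Then $t$ is a set, it is transitive, it contains $\vec p$, and it is closed under every $F_i$.

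Finally, prove by induction on subformulas that $\varphi_j(\vec x)\leftrightarrow\varphi_j^t(\vec x)$ for all $\vec x\subseteq t$. The atomic, $\neg$ and $\wedge$ cases are trivial. In the $\exists$ case, the witness $F_i(\vec x)$ lies in $t$, so a witness in $\mathbf V$ yields one in $t$. The reverse direction holds by the induction hypothesis.

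The main obstacle is the closure step. Without Power Set we cannot simply form ``the next $V_\alpha$''. We have to check that the bound $b$ exists, which requires Collection (not mere Replacement) applied to the set $\operatorname{Ext}_<(a)^{<\omega}$, and that the $\omega$-iteration is legitimate. Both follow from the global well-order. Collection comes via the earlier theorem, and the uniform least-witness choice removes any need for class choice. Everything is a scheme: we fix $\varphi$ in the metatheory, since the statement is one theorem for each $\varphi$.
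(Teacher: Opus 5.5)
Your proposal is correct and follows essentially the same route as the paper: least-witness Skolem functions for the existential subformulas, an $\omega$-iteration of a bounding map along the global well-order starting above $\vec p$, and $t$ taken as the initial segment $\{x\mid x<s\}$ (your $\bigcup_k\operatorname{Ext}_<(a_k)$), which is transitive because $\in$ implies $<$. One small remark: your $F_i$ are single-valued, so Replacement already suffices for the bounding step. Indeed, $b=Y\cup\{a\}$ bounds any set $Y$, since $y\in b$ implies $y<b$, so Collection is not actually needed there.
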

\begin{proof}
    Let's prove a strengthened version of this claim: this transitive set $t$ is actually of the form $\{x\mid x<s\}$, and $s$ has the property $\forall x<s\exists y<s(x<y)$.
    Consider any construction sequence of $\varphi$, namely $\varphi_0(\vec{x}_0),\dots,\varphi_n(\vec{x}_n)=\varphi$, such that $\vec{x}_n\subseteq\dots\subseteq\vec{x}_0$; the free variables of $\varphi_i(i\leq n)$ are all in $\vec{x}_i$, and every variable in $\vec{x}_i$ is either free or does not occur in $\varphi_i$.
    Without loss of generality, we can suppose that every $\varphi_i$ is either an atomic formula, the negation of a previous formula, the conjunction of two previous formulae, or the existential quantification of a previous formula.
    We define a function $f_i(\vec{x}_i)$ for every $\varphi_i$: if $\varphi_i$ is not the existential quantification of a previous formula, set $f_i(\vec{x}_i)\equiv \varnothing$; if $\varphi_i=\exists x_{n_i}(\varphi_j)$, $f_i(\vec{x}_i)$ is the $<$-least $s$ such that, if $\exists x_{n_i}(\varphi_j)$ is correct, then the $<-$least such $x_{n_i}$ is $<s$.
    Now we define $F(s)$ to be the strict $<$-supremum of the set $\{f_i(\vec{x}_i)\mid i\leq n,\forall x_{i_k}\in\vec{x}_i(x_{i_k}<s)\}$; take $s_0$ to be the strict $<$-supremum of $\vec{p}$, and inductively take $s_{n+1}=F(s_n)$.
    It's easy to observe that the $t=\{x\mid x<s\}$ obtained by $s=\operatorname{ssup}_{<}\{s_n\mid n\in\omega\}$ is the desired transitive set.
\end{proof}

Now we add a new axiom and name the extended theory $\mathsf{SC}$.

\textbf{Limitlessness in Size.} Every cardinal has its successor in the sense of $\mathsf{ZFGC}^-$.

If we replace the additional axiom above with the following one, the theory obtained will be called $\mathsf{SC}^+$.

\textbf{Uniform Boundedness.} The encoding function is uniformly bounded:
$$\forall\alpha\;\exists\beta\;\forall x\,\bigl(\forall\gamma\in x\;(\gamma<\alpha)\to e(x)<\beta\bigr)$$

There is a natural strengthening of the above axiom, which will lead to $\mathsf{SC}^{\text{reg}}$.

\textbf{Regularity of the Encoding.} $e(x)<e(y)$ if $\operatorname{sup}(x)<\operatorname{sup}(y)$.

The following theorem is obvious.
\begin{theorem}
    $\mathsf{SC}^+$ bi-interprets $\mathsf{ZFGC}$, and $\mathsf{SC}$ bi-interprets $\mathsf{ZFGC}^{-}+$every cardinal has its successor.
    $\mathsf{SC}^{\text{reg}}$ is bi-interpretable with the theory $\mathsf{ZFGC}+<$ respects the partial order induced by $\operatorname{rank}$, which will be denoted by $\mathsf{ZFGC}^+$.
\end{theorem}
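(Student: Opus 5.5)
We build on the bi-interpretation of $\mathsf{SC}^-$ with $\mathsf{ZFGC}^-$ from the previous theorem. In one direction, ordinals carry the relation $E$ and the order $<$. In the other, a model $(V,\in,<)$ of $\mathsf{ZFGC}^-$ is sent to $(V,V;<,\in,\operatorname{id})$. It then suffices to check that, under these two translations, each new axiom on the $\mathsf{SC}$ side corresponds to the intended axiom on the set side. The translations are unchanged, so the bi-interpretation identities (that the round-trip maps are definable isomorphisms) carry over verbatim.

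\textbf{The case of $\mathsf{SC}$.} Limitlessness in Size is stated in the language of $\mathsf{ZFGC}^-$ via $E$. The translation transports it literally, so this case is immediate.

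\textbf{The case of $\mathsf{SC}^+$.}
\begin{itemize}
\item From $\mathsf{SC}^+$ to Power Set. Fix an ordinal $\alpha$. Uniform Boundedness gives $\beta$ with $e(x)<\beta$ for every $x\subseteq\alpha$.
\item The set $\mathcal{P}(\alpha)$. Bounded comprehension yields $y=\{\delta<\beta\mid \exists x\,(x\subseteq\alpha\wedge \delta=e(x))\}$. Then $e(y)$ is, in the $E$-model, the power set of the set coded by $e(\operatorname{Ext}_<(\alpha))$.
\item Power set of an arbitrary set. Every set $a=e(z)$ is an $E$-subset of some initial segment $\operatorname{Ext}_<(\alpha)$, by Boundedness. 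So $\mathcal P(a)$ is obtained by separation from $\mathcal P(\operatorname{Ext}_<(\alpha))$.
\item Collection. This comes from the earlier strengthened replacement theorem.
\item Converse. In $\mathsf{ZFGC}$, the set $\operatorname{Ext}_<(\alpha)$ exists by set-likeness, so its power set exists. Set-likeness of $<$, together with Replacement applied to the set of $<$-predecessors, bounds all its subsets below some $\beta$. This gives Uniform Boundedness under $\operatorname{id}$.
\end{itemize}

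\textbf{The case of $\mathsf{SC}^{\mathrm{reg}}$.} Here we must match Regularity of the Encoding with ``$<$ respects rank''.
\begin{itemize}
\item First, $\mathsf{SC}^{\mathrm{reg}}$ implies $\mathsf{SC}^+$. Given $\alpha$, take $\beta=e(\{\alpha\})$. Any $x$ bounded by $\alpha$ has $\sup x\le\alpha<\sup\{\alpha\}$, under a suitable reading of $\sup$ as the strict supremum. Hence $e(x)<\beta$.
\item Identifying rank. Show by induction along $<$ that in the $E$-model, the $E$-rank of $e(x)$ is monotonically tied to $\sup(x)$ in the $<$-sense. Concretely, if $\operatorname{rank}_E(a)<\operatorname{rank}_E(b)$, then some element of $b$ lies $<$-above every element of $a$, so the strict supremum of $a$ is below that of $b$.
\item Closing the direction. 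By regularity this gives $a<b$. Hence $<$ respects rank.
\item Converse. In $\mathsf{ZFGC}^+$, suppose $x,y$ are sets of ordinals-as-sets with $\sup_<x<\sup_<y$. Then $y$ has an element $\eta$ with $\eta\ge$ every element of $x$.
\end{itemize}

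The converse of the regularity case is the step I expect to be the main obstacle. One has to argue that $\eta$ has rank at least that of all elements of $x$, and then compare $\operatorname{rank}(x)$ with $\operatorname{rank}(y)$. The worry is that $<$ on the $\mathsf{SC}$ side is the given well-order, while rank is $E$-rank, and these need not align. The cleanest route is to first prove inside $\mathsf{ZFGC}^+$ that the rank-respecting global order makes each rank-initial segment $V_\gamma$ a $<$-initial segment. With that in hand, ``$\sup_<$'' and ``rank'' become interdefinable, and ties of equal rank can be handled by choosing the stated order between equal-rank elements. If equal ranks still cause trouble, I would restate regularity with strict suprema taken as $<$-least strict upper bounds and check that it then matches exactly.
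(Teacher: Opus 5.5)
\textbf{The gap: the converse direction, ZFGC$^+$ to Regularity.} This is the step you flag, and it cannot be closed along the lines you suggest. ``$<$ respects rank'' only compares sets of different ranks, but Regularity for $(V,V;<,\in,\operatorname{id})$ also constrains the order within a single rank level. Let $p<q$ be the two sets of rank $2$, and put $a=\{p\}$, $b=\{q\}$, both of rank $3$. Nothing in $\mathsf{ZFGC}^+$ decides how $a$ and $b$ compare: transposing them in a model of $\mathsf{ZFGC}^+$ gives a definable, set-like, $\in$-extending, rank-respecting order, hence another model. So there are models with $b<a$, even though $\operatorname{ssup}_<(a)\le q<\operatorname{ssup}_<(b)$, which Regularity would turn into $a<b$. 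In particular, $V_\gamma$ being a $<$-initial segment does not make $\operatorname{ssup}_<$ and rank interdefinable.

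Your fallback of re-choosing the order among equal-rank sets is not available in a bi-interpretation. The round trip must return $(V,\in,<)$ up to a definable isomorphism, and a definable $\in$-automorphism of $V$ is the identity, so the order cannot change. More decisively, for every $N\models\mathsf{SC}^{\mathrm{reg}}$ the structure $(\mathrm{Ord}^N,E,<)$ satisfies $\operatorname{ssup}_<(a)<\operatorname{ssup}_<(b)\to a<b$. Hence no interpretation of $\mathsf{SC}^{\mathrm{reg}}$ in $\mathsf{ZFGC}^+$ can be paired with this one into a bi-interpretation unless $\mathsf{ZFGC}^+$ proves that condition, and the example shows it does not.

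The paper's proof does not cover this direction either; it only proves that Regularity implies rank-respect. The clause becomes correct if $\mathsf{ZFGC}^+$ is defined as $\mathsf{ZFGC}$ plus $\operatorname{ssup}_<(a)<\operatorname{ssup}_<(b)\to a<b$, and then your forward argument shows rank-respect follows. With the literal definition, the natural translations give only mutual interpretability, obtained by re-sorting the given order by strict suprema.

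\textbf{What is correct.} Your handling of $\mathsf{SC}$ and $\mathsf{SC}^+$ is fine; the paper simply calls these parts obvious. Your proof that Regularity makes $<$ respect $\operatorname{rank}_E$ is also correct, and more direct than the paper's. The paper uses a nested induction over set ordinals with the auxiliary set $z=\{\xi\in x\mid \operatorname{rank}_E(\xi)\le\beta\}$. You instead pick $\eta\in b$ with $\operatorname{rank}_E(\eta)\ge\operatorname{rank}_E(a)$, use the induction hypothesis on $\max(a,b)$ to put $\eta$ above every element of $a$, and read off $\operatorname{ssup}(a)\le\eta<\operatorname{ssup}(b)$.

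You are right that $\sup$ must mean the strict supremum, and the paper's step producing $\zeta_1<\zeta_2$ tacitly needs this as well. With least upper bounds the implication fails. Suppose the order begins with $V_\omega$, then an infinite $A\subseteq V_\omega$, then $\{A\}$, then another infinite $B\subseteq V_\omega$. All three have supremum $\omega$, so Regularity imposes nothing among them and permits $\{A\}<B$, although $\operatorname{rank}\{A\}=\omega+1>\omega=\operatorname{rank}B$.
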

\begin{proof}
    $\mathsf{SC}$ bi-interprets $\mathsf{ZFGC}^{-}+$every cardinal has its successor is obvious, so let's look at $\mathsf{SC}^+$.

    If the encoding is regular, we use induction on all set ordinals to show that $<$ respects the partial order induced by $\operatorname{rank}_E$.
    For any set ordinal $\alpha$, the induction hypothesis states that if $\gamma<\beta<\alpha$ are set ordinals and $\delta,\xi$ are ordinals such that $\operatorname{rank}_E(\delta)=\gamma$, $\operatorname{rank}_E(\xi)=\beta$, then $\delta<\xi$.
    Our goal is to prove that, if $\beta<\alpha$ is a set ordinal and $\gamma,\delta$ are ordinals such that $\operatorname{rank}_E(\gamma)=\beta$ and $\operatorname{rank}_E(\delta)=\alpha$, then $\gamma<\delta$.
    Assume $\gamma=e(y)$ and $\delta=e(x)$, we have $z=\{\xi\in x\mid \operatorname{rank}_E(\xi)\leq\beta\}\subseteq x$.
    Obviously $\operatorname{rank}_E(e(z))=\beta+_E1\leq\alpha$($\beta+_E1$ is the next set ordinal, since we have $\beta+1$ as the next ordinal); if $\beta+_E1<\alpha$, we have $\beta<\beta+_E1<\alpha$, so induction hypothesis indicates that $\gamma<e(z)$, and we will prove that $\operatorname{sup}(z)<\operatorname{sup}(x)$, so $e(z)<\delta$ follows from regularity.
    If not, since $z\subsetneq x$, there must be some $\zeta_1\in x\backslash z$ and $\zeta_2\in z$ such that $\operatorname{rank}_E(\zeta_1)=\beta+_E1>\beta\geq\operatorname{rank}_E(\zeta_2)$ but $\zeta_1<\zeta_2$, but the induction hypothesis shows that this case is impossible.

    So the rest is to prove that, if $\alpha=\beta+_E1$, we still have $\gamma<\delta$.
    This time we will directly show that $\operatorname{sup}(y)<\operatorname{sup}(x)$: if not, there still will be some $\zeta_1\in x$ and $\zeta_2\in y$ such that $\operatorname{rank}_E(\zeta_1)=\beta>\operatorname{rank}_E(\zeta_2)$ but $\zeta_1>\zeta_2$; notice that $\operatorname{rank}_E(\zeta_2)<\beta<\alpha$, the induction hypothesis again gives us the desired contradiction.
\end{proof}

\subsection{Comments on $\mathsf{SC}$}

Here are more comments on the previous formalization of our conception.

Few objections can be raised against the requirements of well-ordering, extensionality for sets, and the axiom of infinity.

The boundedness of sets, the bounded comprehension schema, and order preservation are all directly mandated by our conception: we do not directly engage with the question "why all sets cannot form a set," but rather point out that such sets are unnecessary—and thus need not be studied—in the process of extending the ordinals; moreover, we naturally desire a simple predicative requirement, namely that every element of a set must be constructed before the set itself is constructed, which is precisely what order preservation expresses.

We now turn to the Replacement schema.
It precisely formalizes the second generating principle of Cantor that we discussed in the previous section.
An alternative justification for the Replacement schema is to justify the Reflection principle, since they are obviously provably equivalent over the rest of $\mathsf{SC}^-$.
We do not take this argument here because if we adopt the Reflection principle as revealing the indescribability of the whole universe of ordinals, we are just taking the universe as a completed whole a priori, which contradicts our original conception.
We may come back to this issue in the next section.

The issues concerning Limitlessness in Size and Uniform Boundedness are far more serious.
The former is a direct formalization of the third generating principle, requiring an encoding function only to interpret what a bijection is, whereas the latter is evidently a characterization of a specific property of the encoding function: that an enumeration of all subsets of a given fixed set ordinal can always be completed.
In fact, as long as one can traverse all well-orderings that are binary relations on a given set ordinal—which, via canonical coding techniques, can be reduced to traversing all subsets satisfying a certain formula—then Limitlessness in Size can be proved from the axiom schema of replacement.
There is no strong reason to accept that subsets satisfying that formula are enumerable while all subsets are not, for both attempts aim to create new set ordinals that are essentially larger, and we know nothing about the cardinalities of the two newly created set ordinals; that belongs to the realm of the generalized continuum hypothesis.

On the other hand, rejecting Uniform Boundedness leads to strange consequences: at some least ordinal $\alpha$, we suddenly discover that we must enumerate unboundedly many subsets of $\alpha$ to let the ordinals extend further, yet for every ordinal below $\alpha$, all its subsets are completely enumerated below some ordinal.

Another consideration is that if we have a model of $\mathsf{ZFGC}$, then by first comparing ranks and then comparing the existing well-ordering, we can adjust it to a model of $\mathsf{ZFGC}^+$.
This indicates that Uniform Boundedness is merely an innocuous weakening of regularity.
Regularity itself has its own rationale: it requires us to enumerate bounded sets of ordinals always in the natural order of their suprema, and it forbids enumerating some small bounded set of ordinals too far beyond.

At any stage of generating a limit ordinal $\alpha$, we only care about the bounded sets strictly necessary to propel the ordinal construction forward.
For the second generating principle, the required increasing sequences are intrinsically bounded by $\alpha$.
For the third generating principle, establishing the absence of bijections relies entirely on bounded sets whose suprema are similarly bounded by $\alpha$.
On the other hand, to generate ordinals beyond $\alpha$, the suprema of the necessary newly formed bounded sets will naturally exceed $\alpha$ itself.
This indicates that the bounded sets required below $\alpha$ are, by the very design of the generating principles, fully exhausted at this stage.
So it could be enumerated completely, and there are no obstacles now keeping us from accepting the Regularity.

Since we are dealing with a constructive process that is indifferent to extraneous sets, we are fully justified in demanding that this exhaustion occurs in the most docile and orderly manner possible.
This methodological demand for tameness is precisely what the Regularity axiom—or Uniform Boundedness—formalizes.

Therefore, the intuition behind regularity is quite clear, and we have good reason to accept it directly.
In fact, our conclusion may be read as follows.

\begin{center}
\fbox{
  \parbox{0.5\textwidth}{
    \textbf{$\mathsf{SC}^{\text{reg}}$ is the (metamathematically) \\
    true first order theory for ordinals.}
  }
}
\end{center}

Finally, why can we not defend the claim $\exists p \, (V = \mathrm{HOD}_p)$?
In the context of $\mathsf{SC}$, this is equivalent to requiring that $<$ be definable in terms of $E$.
However, it is the relation $<$—rather than $E$—that occupies a more foundational position in the construction.
Thus, it seems unreasonable to define the pre-existing ordering $<$ using the subsequently introduced relation $E$.
Moreover, a straightforward argument shows that there can be no relation $E$ definable solely in terms of $<$ that satisfies the required axioms: if so, $\mathsf{ZFC}$ would prove that the universe has a definable global well-ordering, contradicting the fact that it does not exist after an Easton forcing over $\mathbf{L}$ that adds two Cohen subsets to every regular cardinal.
Consequently, there is no justification for imposing any definability connection between them.

\section{Coding Conception on the Theory of Sets}

Our previous discussion has indicated that $\mathsf{ZFC}$ can be regarded as an equiconsistent corollary theory that emerges when one demands a theory describing the sufficiently extended process of ordinals.
However, if we aim to obtain a theory that purports to describe the concept of ``set'', then all the previous variants of $\mathsf{SC}^-$ are inadequate since none of their axioms derive from any intuitive principle that we expect to characterize the properties that all sets should satisfy.
How, then, can we obtain a satisfactory pure set theory?

A well-known fact is that ``every set can be encoded as a bounded set of ordinals'' is itself another formulation of the Axiom of Choice.
Yet our theory can only handle bounded sets of ordinals, so we need to defend this principle first.
We adopt a form of Cantor's own defense: applying Cantor's Ordinal Thesis directly to sets.
To this end, we must first clarify what can be achieved in a single-step mathematical construction, especially what do we mean by a single-step construction of set, thereby delineating which mathematical constructions must be regarded as multi-step or even transfinite constructions.
The simplest understanding of this question still follows Gödel’s view: sets are always generated by the set-of operation.
In the present context, this is interpreted as meaning that a single-step construction of a set is just a single-step set-of operation.
Therefore, if we further apply Cantor’s Ordinal Thesis, this requires that there exist a well-ordering $e$ on the canonical graph $(\operatorname{TCl}(\{x\}),\in)$ of every set, such that $(y,z)\in e$ whenever $y\in z\in\operatorname{TCl}(\{x\})$.
This, of course, implies the existence of a well-ordering on $x$.

The simplest way to consider all bounded sets of ordinals is, of course, to consider all $\mathsf{SC}$ models in which the class of all possible ordinals is taken as the true class of ordinals, and to assert that a set exists if and only if it is a bounded set of ordinals identified in some $\mathsf{SC}$ model.
However, enumerating these $\mathsf{SC}$ models is rather inconvenient; a direct simplification is to enumerate all binary relations $E$ on ordinals that yield models of $\mathsf{ZFGC}$. Previous work has already shown that this involves no technical difficulty.
Given that the most commonly used system of second-order logic, namely monadic second-order logic, allows us to enumerate classes over ordinals rather than binary relations, we naturally wish to interpret a class over some ordinal as a binary relation on ordinals via a uniform coding method.
It is well known that there is a canonical well-order on ordered pairs of ordinals: we first compare the larger element of each pair and then proceed lexicographically; we may still use $<$ to denote this well-order.
Even a very weak set theory suffices to prove the existence of an order isomorphism that maps each ordered pair of ordinals to an ordinal, which is exactly the order type of the set of all ordered pairs of ordinals smaller than this pair under the aforementioned well-order.
We naturally regard a function taking ordered pairs as input as a binary function and call this function the Gödel pairing function, denoted $\operatorname{Pair}(\alpha,\beta)$.

In a set-theoretic system, we can simply prove the existence of this function, but doing so raises the suspicion that the following issue might arise: for the same pair $(\alpha,\beta)$, switching the relation $E$ could result in $\operatorname{Pair}(\alpha,\beta)$ yielding two distinct ordinals (since $E$ is used in its definition).
To avoid this, we desire a better definition of $\operatorname{Pair}$ such that its value is independent of the choice of the specific relation $E$.
Since ordinals are the natural extension of natural numbers, and the archetypal theory of natural numbers is arithmetic, the most natural approach is to add axioms of ordinal arithmetic.
By the results in Appendix A, we can indeed define the required $\operatorname{Pair}$ on the class of ordinals using only $<, +, \times$.

Adding ordinal arithmetic has another benefit: clearly $\omega$ is definable, which means that we can easily obtain the arithmetic structure on natural numbers by restricting the domain of discourse.
To enumerate all classes that decode to models of $\mathsf{ZFGC}$, we need a first-order truth predicate with second-order parameters to determine this, since $\mathsf{ZFGC}$ is clearly not finitely axiomatizable. For this purpose, the most natural approach is to directly define the desired truth predicate via the class comprehension principle and the arithmetic structure of natural numbers in the standard way; readers who doubt the technical details of this may refer to Appendix B.

In summary, we have now determined all the non-logical symbols that the intended monadic second-order logic should possess: the relations $<, +, \times$ on ordinals and the membership relation $\in$ from ordinals to classes are entirely sufficient.
We continue to use $\alpha,\beta,\dots$ as free variables for ordinals, $X,Y,\dots$ as free variables for classes, and $x,y,\dots$ as free variables for bounded classes of ordinals.

We now list the axioms that are intuitively valid in this logic.
First and foremost are the axioms describing ordinal arithmetic; we denote the theory formed by these axioms as $\mathsf{OA}$.

\textbf{Infinite Well‑ordering; the first generation principle.} On $<$:
$$\forall\alpha,\beta\;(\alpha<\beta\;\lor\;\beta<\alpha\;\lor\;\alpha=\beta)$$
$$\forall\alpha,\beta\;(\alpha<\beta\to\alpha\neq\beta)$$
$$\forall\alpha,\beta,\gamma\;(\alpha<\beta\;\land\;\beta<\gamma\to\alpha<\gamma)$$
$$\exists \alpha(\exists\beta(\beta<\alpha)\wedge\forall\beta<\alpha\exists\gamma<\alpha(\beta<\gamma))$$
$$\forall X\,\bigl(\exists\alpha\in x\to\exists\alpha\in x\;\forall\beta\in x\;(\neg(\beta<\alpha))\bigr)$$

\textbf{Replacement Schema; the second generation principle.} For every formula $\varphi(\gamma,\delta)$ (possibly with both first order and second order parameters), we have a replacement axiom:
$$\forall\alpha\,\Bigl(\forall\gamma<\alpha\;\exists!\delta\;\varphi(\gamma,\delta)\;\to\;\exists\beta\;\forall\gamma<\alpha\;\exists\delta<\beta\;\varphi(\gamma,\delta)\Bigr)$$

\textbf{Extensionality.} Classes are extensional:
$$\forall X\forall Y\,\bigl(\forall\alpha\,(\alpha\in X\leftrightarrow\alpha\in Y)\to X=Y\bigr)$$

\textbf{Comprehension Schema.}
For every formula $\varphi(\alpha)$ (possibly with both first order and second order parameters), we have a comprehension axiom:
$$\exists X\;\forall\alpha\;\bigl(\alpha\in X\leftrightarrow\varphi(\alpha)\bigr)$$

\textbf{Arithmetic.} $+$ and $\times$ have been computed recursively:
$$\forall\alpha(\alpha+0=\alpha\land \alpha\times 0=0)$$
$$\forall\alpha,\beta(\alpha+\beta=\operatorname{ssup}\{\alpha+\gamma\mid\gamma<\beta\})$$
$$\forall\alpha,\beta(\alpha\times\beta=\operatorname{ssup}\{\alpha\times\gamma+\delta\mid\gamma<\beta\land\delta<\alpha\})$$

It is hard to claim that the third generation principle is a principle of ordinal arithmetic, for it actually concerns the successor of cardinals, which ought to be a principle of cardinal arithmetic.
Even if we establish that cardinality is a concept subordinate to ordinality, cardinal arithmetic bears no relation to recursive notions in any sense; it is more like a relation that must be studied relying on set theory.
For instance, cardinal addition arises from disjoint unions of sets, cardinal multiplication from Cartesian products of sets, cardinal exponentiation from the power-set operation on sets, and so on.
Accordingly, we call the theory obtained by further adding the following axioms to $\mathsf{OA}$ the theory $\mathsf{OA}^+$.

\textbf{The third generation principle}
For any ordinal $\beta$, there exists some $\alpha$ such that no class $X$ codes a bijection between $\alpha$ and $\beta$ in the sense of $\operatorname{Pair}$.

$\mathsf{OA}$ can discuss functions on ordinals via $\operatorname{Pair}$, so it could prove those elementary theorems on ordinal arithmetic, such as the associativity of addition and multiplication, the commutativity of addition and multiplication on natural numbers, and the existence of the recursively-defined ordinal exponential function, and so on.

However, the most crucial topic here is to define the canonical encoding that depicts the canonical graph of a set as a bounded class of ordinals.
By a well-known result in $\mathsf{ZF}$ set theory, an accessible pointed graph $(g,e)$ is an exact picture if and only if it is extensional and well-founded; in $\mathsf{ZFC}$, we can assume without loss of generality that $G$ is an ordinal.
Now, since $e$ is a binary relation on $g=\alpha$, and every point is on some edge, we can take $g=\operatorname{dom}(e)\cup\operatorname{ran}(e)$, so we only need a predicate that says ``$e$ gives an extensional well-founded apg on some ordinal''; this will be denoted by $\operatorname{isPreSet}(e)$.

It is easy to define when two presets $e_1,e_2$ represent the same set and when one preset represents an element of the other.
However, the real obstacle here is that we cannot take a set to be the third-order class of all its representational presets, and we do not know whether there is a canonical well-ordering on all bounded classes of ordinals.
If there is such a well-ordering, then by the same argument as in $\mathsf{SC}$, we will have a maximal encoding of all sets, and the universe will be a model of $\mathsf{ZFGC}$; if there is no such well-ordering, then there is no maximal encoding of all sets, and we may face more difficulties in deciding the true theory of all sets.

Nevertheless, this difficulty does not prevent us from deriving a first-order set theory from the second-order ordinal theory: we need only interpret quantification over sets as quantification over presets and interpret the equality and membership relations between sets correspondingly as the relations of representing the same set and of membership between presets.
However, it's easy to observe that the classes in second order ordinal theory can only be viewed as those set-classes injectively mapped into $\mathbf{Ord}$, which means we are not able to derive a second order set theory directly.
Anyway, since we have previously argued that every set ought to be represented by some preset, we arrive at the following conclusion.

\begin{center}
\fbox{
  \parbox{0.65\textwidth}{
    \textbf{The true first-order set theory is a consequence\\ of the true second-order ordinal theory.}
  }
}
\end{center}

A minor issue is that the true first-order ordinal theory $\mathsf{SC}^+$ (i.e., $\mathsf{ZFGC}^+$) we established earlier requires, in addition to the $<$ relation, a global $\in$ relation (namely, the predicate $E$ on ordinals corresponding to the encoding function).
However, the true second-order ordinal theory we have established here does not accommodate this predicate. This is because we can now freely quantify over and discuss second-order objects on ordinals (such as this predicate), and thus we only need to add the following axiom.

\textbf{$\mathsf{MU}$, Maximality of the Universe of Sets.} For every bounded class $g$, there is a class $E$ such that $\exists \alpha\forall\beta(\beta\in g\leftrightarrow\operatorname{Pair}(\beta,\alpha)\in E)$, and we have $\operatorname{isRegEncoding}(E)$ at the same time, so the axiom might be read as ``every set can be encoded''.

$\mathsf{ZFGC}^-$ provides a bijection between sets and (set) ordinals that respects $<$.
Therefore, if some preset $g$ is coded in $E$, i.e, $\exists\alpha\forall\beta(\beta\in g\leftrightarrow\operatorname{Pair}(\beta,\alpha)\in E)$ and $\operatorname{isSemiEncoding}(E)$, then we can decode $\alpha$ as $g$ in the set universe represented by $E$ and draw the conclusion that the set represented by $g$ is actually inside this universe.
On the other hand, every bounded class $e$ is, in fact, a set, so there is always a preset $g$ representing it; furthermore, we have $e$ coded in $E$ if and only if $g$ is coded in $E$.
As a conclusion, a semi-encoding encodes all (pre)sets if and only if it encodes all bounded classes.

Now, how can we talk about collections of presets?
An easy way is to introduce the notion of hyperclass.
In set theory, a hyperclass is a collection of classes, while a class is a collection of sets.
Here we may still take hyperclasses as collections of classes, but now classes are collections of ordinals.
We define a hyperclass as a formula with possibly parameters that has only one variable for classes.
A hyperclass is uniformly bounded if and only if, for some ordinal $\alpha$, every class that satisfies the formula is bounded by $\alpha$.

Now we want to talk about hyperclass in second order theory itself, which leads to the following definition.
A class $C$ encodes a hyperclass if and only if for every class-element $D$ of the hyperclass, there exists precisely one ordinal $\alpha$ such that for every $\theta\in C$, $\theta=\operatorname{Pair}(\alpha,\beta)$ for some $\beta$ if and only if $\beta\in D$.
For any class $C$, the class $\{\beta\mid\operatorname{Pair}(\alpha,\beta)\in C\}$ will be denoted as $C_\alpha$: the $\alpha-$th section of $C$.
It is easy to observe that, a hyperclass is encoded as a bounded class if and only if the hyperclass itself is uniformly bounded, and a preset represents this hyperclass as a set of sets of ordinals.

\subsection{Type-A Universe: $\mathsf{MC}_A$}

The claim that there exists a global well-ordering in the universe of sets can be deduced from, in our present context, the requirement that there exists a Regular Encoding in which all bounded classes are encoded.
In its more traditional equivalent form, the latter one is equivalent to the claim that the universe of all sets indeed satisfies $\mathsf{ZFGC}$.

We will find that an extremely weak form of commitment to the power set axiom almost entails this remarkably strong assumption, which is incompatible with another natural perspective on the maximality of the set-theoretic universe.
This weak formulation of the power set axiom within second-order ordinal theory is stated as follows.

\textbf{$\mathsf{UE}$, Uniform Encodability.} Every uniformly bounded hyperclass can be encoded as a single bounded class.

As the power ``set'' of any ordinal is obviously a hyperclass, uniform encodability is equivalent to the statement that, for every ordinal $\alpha$, there is a bounded class that encodes $P(\alpha)$; if we take a regular encoding for this bounded class, then a trivial argument shows that this regular encoding encodes all bounded classes that are bounded by $\alpha$.
The underlying idea is straightforward: we do not presuppose the existence of a single, global regular encoding that encodes all bounded classes.
Instead, we conclude that for every ordinal $\alpha$, there exists a sufficiently well-behaved regular encoding that ensures all bounded classes below $\alpha$ are encoded.
Intuitively, we may say that this regular encoding correctly computes the power set of each ordinal up to $\alpha$.

\begin{theorem}
    $\mathsf{ZFC}$ proves that every standard model of $\mathsf{OA}$ extended with $\mathsf{UE}$ also satisfies the axiom ``there exists a maximal regular encoding of all bounded classes'': $\exists E(\operatorname{isRegEncoding}(E)\land \forall g\exists\alpha\forall\beta(\beta\in g\leftrightarrow\operatorname{Pair}(\beta,\alpha)\in E)$.
\end{theorem}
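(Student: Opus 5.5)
Working in $\mathsf{ZFC}$, fix a standard model $\mathcal{M}$ of $\mathsf{OA}+\mathsf{UE}$. Here ``standard'' means its ordinals form a genuine initial segment (or the whole) of the ordinals, with $+,\times$ and $<$ the true ones, and its classes are genuine subclasses of that segment. The plan is to glue the local encodings supplied by $\mathsf{UE}$ into one global regular encoding, using regularity to force coherence. Let $\Omega$ be the ordinal height of $\mathcal{M}$. By the remark after the statement of $\mathsf{UE}$, for every $\alpha<\Omega$ there is a regular encoding $E$ in $\mathcal{M}$ that encodes every bounded class bounded by $\alpha$. Call such an $E$ $\alpha$-\emph{complete}.

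\textbf{Step 1: the bounded initial parts of $\alpha$-complete encodings are rigid.} Any two $\alpha$-complete regular encodings $E,E'$ should agree, up to a canonical isomorphism, on the part coding sets of rank below $\alpha$. Both decode, via the bi-interpretation theorem for $\mathsf{SC}^{\mathrm{reg}}$ and $\mathsf{ZFGC}^+$, to universes in which $V_\alpha$ is computed correctly, because every bounded class below $\alpha$ is encoded and the decoded $E$-structure is well-founded by standardness. Rank-respecting of the induced order $<$ (regularity) then implies that the codes of sets of rank $<\alpha$ form an initial segment of the ordinals of the same order type, namely the order type of the well-order restricted to $V_\alpha$. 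This is the sense in which the paper's Type-A picture makes $P(\alpha)$ a set.

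\textbf{Step 2: choose a coherent sequence and define $E$.} The rigidity from Step 1 is only up to the choice of well-order on each rank layer. To glue, I define by recursion on $\alpha$, externally in $\mathsf{ZFC}$, a coherent family. Using the global choice available in the metatheory, I fix for each $\alpha$ a well-order of $V_{\alpha+1}\setminus V_\alpha$, and let $E$ be the encoding that lists sets by rank first and then by this well-order. The verification needed is that every initial segment $E\cap\operatorname{Pair}[\theta\times\theta]$ lies in $\mathcal{M}$ as a class. Here $\mathsf{UE}$ does the work: an $\alpha$-complete $E_\alpha\in\mathcal{M}$ encodes $P(\alpha)$ as a bounded class. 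Comprehension together with the arithmetic definition of $\operatorname{Pair}$ then lets $\mathcal{M}$ define, from $E_\alpha$, a copy of any layer order whose code is a bounded class. Consequently it suffices to choose the layer well-orders themselves from $\mathcal{M}$'s bounded classes, which exist by $\mathsf{UE}$.

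\textbf{Step 3: get the global $E$ into $\mathcal{M}$.} This is the main obstacle. The family of bounded pieces is not automatically a class of $\mathcal{M}$, since comprehension only yields classes definable from parameters. My first attempt is to avoid choosing at all. I would let the global order be defined inside $\mathcal{M}$ as follows: $\theta$ codes the set $s$ iff \emph{some} $\alpha$-complete regular encoding with sufficiently large $\alpha$ assigns $s$ code $\theta$. This only works if the codes are absolute between $\alpha$-complete encodings, and Step 1 gives that only up to the layer orders. So I would strengthen $\mathsf{UE}$ by a replacement argument. Applying the Replacement Schema, with second-order parameters, to the map sending $\gamma$ to the least $\alpha$ at which layer $\gamma$ is encoded, I obtain a single bounded class coding the whole sequence of layer well-orders below any bound. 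I then pick these canonically: take the lexicographically least, in the ordinal-coded sense, well-order of layer $\gamma$ that is definable from an encoding of $P(\gamma)$. This yields a definable $E$ via comprehension. The final check is that $\operatorname{isRegEncoding}(E)$ holds, which follows by rank-first ordering as in the $\mathsf{ZFGC}^+$ adjustment remark, and that every bounded $g$ is some section, which holds by $\alpha$-completeness at $\alpha>\sup g$.
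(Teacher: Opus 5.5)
There is a genuine gap, and it comes from how you read ``standard model.'' In the paper this means full second-order semantics: a standard model is $(\kappa,P(\kappa))$ with $\kappa$ its ordinal height, so \emph{every} subset of $\kappa$ is a class. Under this reading, what you call the main obstacle in Step 3 does not arise. The rank-first encoding of your Step 2, which is exactly the encoding the paper uses, is a subset of $\kappa$, so it is automatically a class of $\mathcal{M}$ and no definability is needed.

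What your proposal is missing is the actual content of the proof: showing that $\kappa$ is inaccessible.
\begin{itemize}
\item Replacement with a class parameter makes $\kappa$ a regular cardinal. A cofinal map from some $\lambda<\kappa$ into $\kappa$ is coded by a class, and Replacement bounds its range below $\kappa$.
\item The infinity axiom makes $\kappa$ uncountable.
\item $\mathsf{UE}$, applied to the hyperclass $P(\lambda)$ for $\lambda<\kappa$, gives a class $C\subseteq\beta$ for some $\beta<\kappa$ whose sections list all subsets of $\lambda$. Hence $2^\lambda<\kappa$.
\end{itemize}
Only then do you know that $V_\kappa=H_\kappa$ and that each $V_\alpha$ with $\alpha<\kappa$ has size $<\kappa$. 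It follows that the rank-first listing of $V_\kappa$ has order type exactly $\kappa$. This is what makes the resulting $E$ a bijective, rank-respecting (hence regular) encoding of the model's ordinals, under which every bounded $g\subseteq\kappa$ is a section. Your Step 2 never checks that the listing has the right length. If, say, $2^{\aleph_0}\ge\kappa$, the layer $V_{\omega+1}\setminus V_\omega$ alone would already exhaust the ordinals of $\mathcal{M}$.

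Under your own weaker reading, where classes are merely some subsets closed under comprehension, Step 3 does not go through, and the failure is not a detail. The Replacement Schema only delivers an ordinal bound. It does not gather a $\gamma$-indexed family of chosen layer well-orders into a single class. Doing so requires a choice principle, which is precisely why the paper, once it drops standard semantics, has to add $\mathsf{TDC}$ (and $\mathsf{MU}$) to obtain the same conclusion in $\mathsf{MC}_A$.

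Your attempt to avoid choice by a canonical selection also fails, for two reasons:
\begin{itemize}
\item The lexicographic order on subsets of an infinite ordinal is not a well-order, so a lexicographically least well-order of a layer need not exist.
\item ``Least in the ordinal-coded sense'' presupposes an already fixed encoding of $P(\gamma)$, which is the very thing you are trying to produce.
\end{itemize}
Finally, on this reading an $\alpha$-complete encoding only sees the subsets of $\alpha$ that lie in $\mathcal{M}$. So your Step 1 claim that it computes $V_\alpha$ correctly, and your use of the true layers $V_{\alpha+1}\setminus V_\alpha$ in Step 2, are themselves available only under full semantics.
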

\begin{proof}
    We can prove that these standard models are just $(\kappa,P(\kappa))$ for some inaccessible cardinal $\kappa$: Uniform Encodability shows that $\kappa$ is a strong limit, while the Replacement Schema shows that $\kappa$ is regular.
    Since $\kappa$ is infinite, we finally conclude that $\kappa$ is inaccessible.
    Now $V_\kappa=H_\kappa$, and every set in $H_\kappa$ is represented by some preset in $P(\kappa)$.
    However, there is obviously a well-ordering on $V_\kappa$ that respects $\operatorname{rank}$ and has order type $\kappa$.
    Therefore, a regular encoding derived from this well-ordering is an element of $P(\kappa)$, which is just what we want.
\end{proof}

In fact, we can get rid of the standard semantics after embracing the more powerful choice-like axiom below.

\textbf{$\mathsf{TDC}$, Transfinite Dependent Choice.} Suppose that, for every ordinal $\alpha$ and an $\alpha$-sequence of classes $\langle P^\beta\rangle_{\beta<\alpha}$(which can be treated as a single class with $P^\beta$ being the section of it at $\beta$), there exists a class $P^\alpha$ such that $\varphi(\alpha,P^\alpha,\langle P^\beta\rangle_{\beta<\alpha})$ if and only if, for every $\beta<\alpha$, we have $\varphi(\beta,P^\beta,\langle P^\gamma\rangle_{\gamma<\beta})$.
Then, there exists a class $P$ such that for every $\alpha$ we have $\varphi(\alpha,P_\alpha,\langle P_\beta\rangle_{\beta<\alpha})$.

This axiom seems natural: it provides an $\mathbf{Ord}$-length sequence for a dependent choice process.
Another similar one is the axiom of \textbf{$\mathsf{CC}$, Class Choice}: if for every $\alpha$ there exists some class $C$ such that $\psi(\alpha,C)$, then there exists a globally $C$ such that we have $\psi(\alpha,C_\alpha)$ for every $\alpha$.
The main difference between these two axioms is that, in the axiom of Class Choice, the $C_\alpha$ is independent of what choice we have already made; but in Transfinite Dependent Choice, we can take them as a parameter $\langle P^\beta\rangle_{\beta<\alpha}$ in our formula.

\begin{theorem}
     $\mathsf{OA}\vdash\mathsf{TDC}\to\mathsf{CC}$.
\end{theorem}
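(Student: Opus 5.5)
The proof derives $\mathsf{CC}$ as the special case of $\mathsf{TDC}$ in which the formula ignores the history parameter. Fix a formula $\psi(\alpha,C)$, possibly with parameters, and assume $\forall\alpha\exists C\,\psi(\alpha,C)$. Define
$$\varphi(\alpha,P^\alpha,\langle P^\beta\rangle_{\beta<\alpha}) :\equiv \psi(\alpha,P^\alpha).$$
We check the hypothesis of $\mathsf{TDC}$ for this $\varphi$. Since $\varphi$ does not mention the sequence argument, the hypothesis asks that for every $\alpha$ and every $\alpha$-sequence $\langle P^\beta\rangle_{\beta<\alpha}$ there is a class $P^\alpha$ with $\psi(\alpha,P^\alpha)$, at least when the earlier stages satisfy $\psi(\beta,P^\beta)$. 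This is given outright by $\forall\alpha\exists C\,\psi(\alpha,C)$, whatever the earlier stages are. $\mathsf{TDC}$ then yields a class $P$ with $\psi(\alpha,P_\alpha)$ for every $\alpha$. This is exactly the witness $C:=P$ demanded by $\mathsf{CC}$.

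The remaining work is bookkeeping inside $\mathsf{OA}$. First, an $\alpha$-sequence $\langle P^\beta\rangle_{\beta<\alpha}$ is a single class, namely $\{\operatorname{Pair}(\beta,\gamma)\mid \beta<\alpha,\ \gamma\in P^\beta\}$. Comprehension provides this class, and $\operatorname{Pair}$ is available in $\mathsf{OA}$ by Appendix A. Second, the section $P_\alpha$ returned by $\mathsf{TDC}$ must be the same section $C_\alpha$ used in $\mathsf{CC}$. The paper writes $C_\alpha$ for $\{\beta\mid\operatorname{Pair}(\alpha,\beta)\in C\}$, and we use the same pairing convention for both axioms. Extensionality makes the section a well-defined class, so $\psi(\alpha,P_\alpha)$ is literally $\psi(\alpha,C_\alpha)$.

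The only real obstacle is that the paper states $\mathsf{TDC}$ loosely as an ``if and only if''. Read literally, the biconditional is stronger than what we can verify. We would read the hypothesis as the standard one: every initial sequence that is good up to $\alpha$ can be extended at stage $\alpha$. Under that reading the argument above goes through unchanged.

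If the literal biconditional must be honoured, modify $\varphi$ to
$$\varphi(\alpha,P^\alpha,\langle P^\beta\rangle_{\beta<\alpha}) :\equiv \psi(\alpha,P^\alpha)\wedge \forall\beta<\alpha\,\psi(\beta,P^\beta).$$
The extra conjunct ties the existence of a good stage-$\alpha$ class to the goodness of the earlier stages. For this $\varphi$, a suitable $P^\alpha$ exists exactly when all earlier stages satisfy $\psi$. The existence of some $C$ with $\psi(\alpha,C)$ is supplied by the assumption $\forall\alpha\exists C\,\psi(\alpha,C)$. The equivalence between $\varphi(\beta,P^\beta,\langle P^\gamma\rangle_{\gamma<\beta})$ for all $\beta<\alpha$ and $\forall\beta<\alpha\,\psi(\beta,P^\beta)$ is routine. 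Its conclusion again gives $\psi(\alpha,P_\alpha)$ for all $\alpha$, so $C:=P$ witnesses $\mathsf{CC}$.
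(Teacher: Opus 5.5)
Your proposal is correct, and its final form is the paper's proof. The paper takes exactly $\varphi(\alpha,C^\alpha,\langle C^\beta\rangle_{\beta<\alpha})\equiv\psi(\alpha,C^\alpha)\wedge\forall\beta<\alpha\,\psi(\beta,C^\beta)$, so that the biconditional hypothesis of $\mathsf{TDC}$ holds, and then reads off $\mathsf{CC}$ from the resulting class. You are also right that the history-free $\varphi$ fails the literal hypothesis, since an extension always exists even when earlier stages are bad, and the extra conjunct is exactly what repairs this.
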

\begin{proof}
    Suppose for every $\alpha$ there exists some class $C$ such that $\psi(\alpha,C)$.
    Let $\varphi(\alpha,C^\alpha,\langle C^\beta\rangle_{\beta<\alpha})$ be the conjunction of $\psi(\alpha,C^\alpha)$ and $\forall\beta<\alpha(\psi(\beta,C^\beta))$; it is easy to verify that it satisfies the condition for applying Transfinite Dependent Choice.
    Now the class obtained from Transfinite Dependent Choice is exactly what we need for Class Choice.
\end{proof}

Now we can eliminate standard semantics from the previous theorem.

\begin{theorem}
    The theory $\mathsf{OA}+\mathsf{MU}+\mathsf{TDC}+\mathsf{UE}$ proves that there exists a maximal regular encoding of all bounded classes.
    We may denote this theory as $\mathsf{MC}_A$.
\end{theorem}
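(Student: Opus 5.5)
The plan is to build the maximal regular encoding by transfinite dependent choice, gluing together the local encodings that $\mathsf{UE}$ supplies. This internalizes the argument of the previous theorem, where the inaccessibility of $\kappa$ gave a rank-respecting well-ordering of $V_\kappa$ of order type $\kappa$.

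The first step uses $\mathsf{UE}$. For every ordinal $\alpha$, the hyperclass $P(\alpha)$ of all classes bounded by $\alpha$ is uniformly bounded. Hence $\mathsf{UE}$ gives a bounded class $C$ coding it, so each bounded subclass of $\alpha$ appears as some section $C_\xi$. From $C$, and using $\mathsf{MU}$ to obtain presets and a regular encoding as the remark after $\mathsf{UE}$ indicates, I expect to extract a \emph{local} regular encoding $E^\alpha$. By this I mean a class satisfying $\operatorname{isRegEncoding}$ restricted to an initial segment, which encodes every bounded class with supremum $\le\alpha$. Its relevant part occupies only the codes below some ordinal $\lambda_\alpha$; in set-theoretic language, it is a rank-respecting enumeration of $V_{\alpha+1}$-like objects.

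The second step applies $\mathsf{TDC}$. At stage $\alpha$, given the sequence $\langle P^\beta\rangle_{\beta<\alpha}$ of already chosen local encodings, I let $\varphi(\alpha,P^\alpha,\langle P^\beta\rangle_{\beta<\alpha})$ say two things. First, $P^\alpha$ is a local regular encoding of all bounded classes with supremum $\le\alpha$. Second, $P^\alpha$ end-extends every $P^\beta$ with $\beta<\alpha$: the codes below $\lambda_\beta$ agree, and new codes are placed above $\sup_{\beta<\alpha}\lambda_\beta$. Regularity is what makes this end-extension possible. Under the regular ordering, a bounded class of supremum $\le\beta$ must receive a code smaller than the code of any class of larger supremum, so earlier codes never need to move.

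The main obstacle is verifying the hypothesis of $\mathsf{TDC}$. Given a coherent sequence of length $\alpha$, I must show that an extension $P^\alpha$ exists. For this I would take the union $U$ of the earlier encodings, which comprehension provides. I would then apply $\mathsf{UE}$ to $P(\alpha)$ to enumerate the finitely-to-uncountably many remaining bounded classes of supremum exactly $\alpha$, namely those not already coded by $U$. These get new codes appended above $\sup_{\beta<\alpha}\lambda_\beta$, which exists by Replacement. The delicate points are two. One is checking order preservation: elements of each new set must already be coded, and this holds because they have supremum below $\alpha$ or are codes of sets already placed. The other is checking that the appended enumeration can be chosen without collisions. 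Both reduce to the fact that $\mathsf{UE}$ gives a single bounded class, hence a bounded stretch of codes. Finally, $\mathsf{TDC}$ yields $P$, and $E=\bigcup_\alpha P_\alpha$ is defined by comprehension. $E$ is a regular encoding, since regularity is local and each $P_\alpha$ has it. It is maximal because every bounded class $g$ has some bound $\alpha$ and is therefore coded in $P_\alpha\subseteq E$.
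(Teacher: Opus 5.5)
Your strategy is the paper's. You use $\mathsf{TDC}$ to choose a coherent sequence of partial encodings, use $\mathsf{UE}$ at each stage to enumerate the new bounded classes, and take the union. Your coherence condition is cleaner than the paper's $P^\beta\cap\gamma=P^\gamma\cap\gamma$. It requires agreement on the initial segment $[0,\lambda_\beta)$ of codes, which regularity forces to be exactly the codes of classes with supremum $\le\beta$. This makes the maximality of $E$ immediate, whereas the paper needs a separate cardinality argument for it. Also, since you build $P^\alpha$ by appending a block to $U$, your extension step does not actually need $\mathsf{MU}$.

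The gap is in your last step. $\operatorname{isRegEncoding}(E)$ says more than that $E$ is an order-preserving, regular bijection between $\mathbf{Ord}$ and its sections. It says that $E$ decodes to a model of $\mathsf{SC}^{\text{reg}}$. That includes the bounded comprehension and replacement schemata for formulas mentioning $e$, expressed via $\operatorname{Sat}$ with class parameter $E$. These are not local properties, so ``regularity is local and each $P_\alpha$ has it'' does not establish them. This is precisely the verification the paper's proof spends its effort on. It is easily supplied:
\begin{enumerate}
\item since $E$ codes every bounded class, class comprehension in $\mathsf{OA}$ with parameter $E$ gives bounded comprehension;
\item the replacement schema of $\mathsf{OA}$, applied to the translated formula with class parameter $E$, gives replacement in the decoded universe.
\end{enumerate}

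A smaller point concerns order preservation, if $\sup$ in the Regularity axiom is the ordinary (non-strict) supremum. Take a limit $\alpha$ with $\sup_{\beta<\alpha}\lambda_\beta=\alpha$; this already happens at $\alpha=\omega$, where every $\lambda_n$ is finite. There the first new code is $\alpha$ itself, so your justification ``elements are codes of sets already placed'' fails for the element $\alpha$. You must order the new block so that a class omitting $\alpha$ (e.g.\ $\alpha$ itself, as a cofinal subclass) receives the code $\alpha$.
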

\begin{proof}
    We will show that the property $\varphi(\alpha,P^\alpha,\langle P^\beta\rangle_{\beta<\alpha})$ satisfies the condition of Transfinite Dependent Choice, which states that every $P^\beta(\beta\leq\alpha)$ is a regular encoding that correctly computes the power set of $\beta$, and for every $\gamma<\beta\leq\alpha$, we have $P^\beta\cap\gamma=P^\gamma\cap\gamma$.
    Only one direction is non-trivial: suppose that for every $\beta<\alpha$ we have $\varphi(\beta,P^\beta,\langle P^\gamma\rangle_{\gamma<\beta})$.
    Our goal is to construct a $P^{\alpha}$ such that it is a regular encoding that correctly computes the power set of $\alpha$, and for every $\beta<\alpha$ we have $P^\alpha\cap\beta=P^{\beta}\cap\beta$; if $\alpha$ is a limit ordinal, this means that we have $P^{\alpha}\cap\alpha$ a fixed uniformly bounded hyperclass; if $\alpha=\alpha'+1$ is a successor ordinal, we just need to fix $P^{\alpha}\cap \alpha'$, which is still a uniformly bounded hyperclass.
    In both cases, uniform encodability concludes that a single preset represents the hyperclass we want, and another preset can represent the true power set of $\alpha$.
    Any regular encoding that encodes both presets can serve as the desired $P^{\alpha}$.

    Now, Transfinite Dependent Choice gives us a class $P$ such that every $P^\alpha$ is a regular encoding that correctly computes the power set of $\alpha$, and for every $\beta<\alpha$ we have $P^\beta\cap\beta=P^\alpha\cap\beta$.
    We will prove that the union of all $P^\alpha\cap\alpha$, denoted as $E$, is a regular encoding that encodes all bounded classes.
    Firstly, we will show that every bounded class is encoded as a ordinal: since every $P(\alpha)$ could be viewed as a bounded class, it has a cardinality which is the least ordinal that carries a bijection to some preset for that bounded class; now suppose for every $\beta<\alpha$ we have all subsets of $\beta$ encoded in $E$.
    Consider the cardinal $(|P(\alpha)|\backslash\operatorname{sup}\{|P_\beta|\mid \beta<\alpha\})^+$, it is easy to check that $E$ has the regularity, so at that step one must have been encoded all subsets of $\alpha$, which leads to our conclusion.
    Now other axioms are easy to check, and we only need to prove that replacement still holds.
    Suppose $\operatorname{Sat}(\ulcorner\varphi\urcorner,\alpha,E,P)$, where $\varphi(\alpha,E,P)$ claiming that for some formula $\psi_0$ with class parameter $P$, the set universe encoded by $E$ satisfies that $\forall \gamma<\alpha\exists!\delta(\psi(\gamma,\delta,P))$.
    Then by the definition of truth predicate, we have that for every $\gamma<\alpha$ there exists only one ordinal $\delta$ such that $\operatorname{Sat}(\ulcorner\psi\urcorner,\gamma,\delta,E,P)$, with $\psi(\gamma,\delta,E,P)$ stating that the set universe encoded by $E$ satisfies $\psi_0(\gamma,\delta,P)$.
    So by the replacement schema in $\mathsf{OA}$, there exists some $\beta$ such that every such $\delta$ is below $\beta$; by comprehension, a bounded class consists of all such $\delta$s exists.
    Now notice that every bounded class is encoded in $E$, this special bounded class gives the desired set in the set universe.
\end{proof}

It is easy to obtain $\mathsf{MU}+\mathsf{TDC}+\mathsf{UE}$ from $\mathsf{OA}+$ ``there exists a maximal regular encoding''.
If it exists, the universe of sets having a set-like global well-ordering implies that every class carries an injection into the class of all ordinals.
So our second order ordinal theory actually quantifies over all set-classes, and we can talk about the true second order set theory in this context.

\subsection{Type-B Universe: $\mathsf{MC}_B$ and $\mathsf{MC}_B^+$}

However, another intuition for maximality rejects any global well-ordering because that means any regular encoding can't enumerate all sets or that there are far more sets than ordinals.
This will be called relative maximality: we want to show that sets are far more than ordinals, instead of claiming that sets are as much as possible directly.
An axiom describing this intuition is as follows.

\textbf{$\mathsf{PS}$, Plenitude of Sets.} For every regular encoding $E$, there is a bounded class $e$ such that $e$ is not coded in $E$.

In fact, a naturally strengthened version of this axiom could be stated as follows.

\textbf{$\mathsf{US}$, Unlimited Size.} For every class $C$, if for every ordinal $\alpha$ we have $C_\alpha$ as a bounded class, then there is a bounded class not equal to any $C_\alpha$.

Unlimited Size, exactly the negation of the Limitation of Size, is a radical strengthening of the Plenitude of Sets.

Let us investigate how this intuition shapes our universe of sets.
We may want $\mathsf{ZFC}^-$ at least, but a strong enough choice axiom is a must for the collection schema.
Here are the related technical results.
It is natural to obtain a bounded version of the dependent choice principle and the class choice principle.

\textbf{$\mathsf{DC}$, Dependent Choice.} Fix an ordinal $\theta$. Suppose that, for every ordinal $\alpha<\theta$ and an $\alpha$-sequence of bounded classes $\langle p^\beta\rangle_{\beta<\alpha}$(which can be treated as a single bounded class with $p^\beta$ being the section of it at $\beta$), there exists a bounded class $p^\alpha$ such that $\varphi(\alpha,p^\alpha,\langle p^\beta\rangle_{\beta<\alpha})$ if and only if, for every $\beta<\alpha$, we have $\varphi(\beta,p^\beta,\langle p^\gamma\rangle_{\gamma<\beta})$.
Then, there exists a bounded class $p$ such that for every $\alpha<\theta$ we have $\varphi(\alpha,p_\alpha,\langle p_\beta\rangle_{\beta<\alpha})$.

\textbf{$\mathsf{AC}$, Axiom of Choice.} For any ordinal $\theta$, if for every $\alpha<\theta$ there exists some bounded class $c^\alpha$ such that $\psi(\alpha,c^\alpha)$, then there exists a bounded class $c$ such that $\psi(\alpha,c_\alpha)$ holds for every $\alpha<\theta$.

\begin{theorem}
    $\mathsf{OA}\vdash\mathsf{DC}\to\mathsf{AC}$.
\end{theorem}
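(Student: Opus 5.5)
We mirror the earlier proof that $\mathsf{OA}\vdash\mathsf{TDC}\to\mathsf{CC}$, now in the bounded setting. Fix an ordinal $\theta$ and a formula $\psi$, and assume that for every $\alpha<\theta$ there is a bounded class $c^\alpha$ with $\psi(\alpha,c^\alpha)$. We will feed $\mathsf{DC}$ (with the same $\theta$) the formula
$$\varphi(\alpha,p^\alpha,\langle p^\beta\rangle_{\beta<\alpha}) \;:=\; \psi(\alpha,p^\alpha)\wedge\forall\beta<\alpha\,\psi(\beta,p^\beta),$$
where $\langle p^\beta\rangle_{\beta<\alpha}$ is read as a single bounded class $q$ with $p^\beta=q_\beta$, via $\operatorname{Pair}$ as in the paper.

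First we check the hypothesis of $\mathsf{DC}$. Fix $\alpha<\theta$ and a bounded class $q$ coding $\langle p^\beta\rangle_{\beta<\alpha}$. Suppose that for every $\beta<\alpha$ we have $\varphi(\beta,p^\beta,\langle p^\gamma\rangle_{\gamma<\beta})$. In particular $\psi(\beta,p^\beta)$ holds for all $\beta<\alpha$. Taking $p^\alpha:=c^\alpha$ then gives $\varphi(\alpha,p^\alpha,\langle p^\beta\rangle_{\beta<\alpha})$. Conversely, if $\varphi(\alpha,p^\alpha,\dots)$ holds for some $p^\alpha$, then every $\psi(\beta,p^\beta)$ with $\beta<\alpha$ holds. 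For each $\gamma<\beta$ we also have $\psi(\gamma,p^\gamma)$, so $\varphi(\beta,p^\beta,\langle p^\gamma\rangle_{\gamma<\beta})$ holds for every $\beta<\alpha$. This establishes the required biconditional.

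$\mathsf{DC}$ now yields a bounded class $p$ with $\varphi(\alpha,p_\alpha,\langle p_\beta\rangle_{\beta<\alpha})$ for all $\alpha<\theta$. In particular $\psi(\alpha,p_\alpha)$ holds for each $\alpha<\theta$, so $c:=p$ witnesses $\mathsf{AC}$.

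The only genuinely delicate step is the bookkeeping around the coding. First, the restriction of $q$ (or of $p$) to indices below $\beta$ must exist as a bounded class, so that the subsequence $\langle p^\gamma\rangle_{\gamma<\beta}$ can legitimately appear as an argument of $\varphi$. This follows from Comprehension together with boundedness: the class $\{\operatorname{Pair}(\gamma,\delta)\in q\mid\gamma<\beta\}$ is a subclass of a bounded class, hence bounded. Second, the sections $q_\beta$ must be recovered correctly from $q$, which uses only that $\operatorname{Pair}$ is an injection definable in $\mathsf{OA}$, as Appendix A provides. With these observations the argument is otherwise immediate.
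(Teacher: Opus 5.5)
Your proposal is correct and is essentially the paper's argument: the paper proves this by pointing back to its $\mathsf{TDC}\to\mathsf{CC}$ proof, which uses the same auxiliary formula $\psi(\alpha,p^\alpha)\wedge\forall\beta<\alpha\,\psi(\beta,p^\beta)$ and takes $c:=p$. Your remarks on restricting the coding class below $\beta$ and recovering sections via $\operatorname{Pair}$ simply spell out bookkeeping the paper leaves implicit.
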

\begin{proof}
    Just the same as the proof of $\mathsf{CC}$ from $\mathsf{TDC}$.
\end{proof}

\begin{theorem}
    Assume $\mathsf{OA}+\mathsf{AC}$; we have at least first-order set theory $\mathsf{ZFC}^-$ naturally interpreted in it.
    If we have $\mathsf{DC}$ in $\mathsf{OA}$, we can deduce $\mathsf{DC}_{<\mathbf{Ord}}$ in the related set theory.
\end{theorem}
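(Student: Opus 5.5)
The plan is to interpret sets as presets, as described before $\mathsf{MU}$. A set variable ranges over bounded classes $g$ with $\operatorname{isPreSet}(g)$, read through $\operatorname{Pair}$ as a binary relation. Equality is the relation ``$g_1,g_2$ represent the same set'', which is witnessed by a bounded class coding an isomorphism between the extensional well-founded apgs. Membership $g_1\,\tilde\in\,g_2$ means that $g_1$ represents the same set as the subgraph of $g_2$ below some immediate predecessor of its top point. Because every quantifier over sets becomes a quantifier over bounded classes, the truth predicate and comprehension of $\mathsf{OA}$ make every set-theoretic formula translate to a formula of $\mathsf{OA}$. First I check that $\approx$ is a congruence for $\tilde\in$. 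Then I verify the axioms one at a time.

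Extensionality follows from the extensionality of the apgs: if two presets have the same $\tilde\in$-members up to $\approx$, I paste the isomorphisms of the members into one isomorphism of the wholes. Foundation follows from the well-ordering of $<$ on bounded classes. Pairing, union and separation come from comprehension: I glue copies of the given presets side by side using $\operatorname{Pair}$ to index disjoint blocks, and then take the extensional collapse. Infinity is the preset $\omega$ itself, since every ordinal $\beta$ yields the preset $\{\operatorname{Pair}(\gamma,\delta)\mid\gamma<\delta\le\beta\}$. Choice holds because every preset lives on an ordinal, so each set is well-orderable via the underlying ordinals of its members.

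The collapse step needs a Mostowski-type lemma inside $\mathsf{OA}$. A well-founded bounded relation has an extensional quotient, and isomorphism between presets is unique when it exists. Both are proved by transfinite recursion along the relation, and replacement in $\mathsf{OA}$ gives the needed bounds. The gluing also needs a single bounded class $c$ whose sections $c_\alpha$ are presets for a family of sets indexed by $\alpha<\theta$. This is exactly where $\mathsf{AC}$ enters.

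The main obstacle is Collection, since the replacement axiom of $\mathsf{OA}$ yields only ordinal bounds. Suppose $\forall u\,\tilde\in\, a\,\exists v\,\varphi(u,v)$. I enumerate the $\tilde\in$-members of $a$ as the points $\alpha<\theta$ of its preset. For each $\alpha$ some preset $v$ satisfies $\varphi$, so by $\mathsf{AC}$ there is a bounded class $c$ with $\varphi(\alpha,c_\alpha)$ for all $\alpha<\theta$. Gluing the $c_\alpha$ under a new top point and collapsing gives the collecting set. This yields the full collection schema, as $\mathsf{ZFC}^-$ requires.

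For the second claim, assume a set-theoretic $\mathsf{DC}_\kappa$ instance: a relation on sequences in which every sequence of length $<\kappa$ has an extension. I translate it to presets and apply $\mathsf{DC}$ in $\mathsf{OA}$ with $\theta=\kappa$. Here $\varphi$ says that $p^\alpha$ is a preset extending the sequence coded by $\langle p^\beta\rangle_{\beta<\alpha}$. The resulting bounded class $p$ then codes a $\kappa$-sequence, which becomes a set by the gluing construction above. Since $\kappa$ is arbitrary, this gives $\mathsf{DC}_{<\mathbf{Ord}}$.
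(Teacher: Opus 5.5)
Your proposal follows the paper's route: Collection, the decisive step, is obtained exactly as in the paper by using $\mathsf{AC}$ to gather the witnesses as the sections $c_\alpha$ of a single bounded class and then decoding them (gluing and collapsing) into one set, $\mathsf{DC}_{<\mathbf{Ord}}$ is handled the same way, and your checks of the other axioms fill in what the paper leaves implicit. Two small repairs are needed: Foundation should be derived from the well-foundedness of the preset's edge relation built into $\operatorname{isPreSet}$ rather than from $<$, since the $<$-least immediate predecessor of the top need not be $\tilde\in$-minimal; and in the $\mathsf{DC}$ step your $\varphi$ must also assert that all earlier stages are good, as in the paper's proof of $\mathsf{TDC}\to\mathsf{CC}$, otherwise the ``only if'' half of the hypothesis of $\mathsf{DC}$ fails.
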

\begin{proof}
    Let us show how to prove the Collection Principle in set theory with the $\mathsf{AC}$ above.
    Suppose that (after interpreting correctly) for every $\alpha<\theta$ and bounded class $x$, there exists at least one bounded class $p^\alpha$ such that $\varphi(x_\alpha,p^\alpha,q)$ holds.
    From $\mathsf{AC}$ we obtain another bounded class $y$, such that $\varphi(x_\alpha,y_\alpha,q)$ holds for every $\alpha<\theta$.
    This is enough to decode a set for the collection principle.
    The proof of $\mathsf{DC}_{<\mathbf{Ord}}$ from $\mathsf{DC}$ is alike.
\end{proof}

Unlike the theory $\mathsf{MC}_A$, $\mathsf{US}$ rejects any global well ordering on the universe of sets.
That is because such a global well ordering can be viewed as a class that enumerates every bounded class as its section, but $\mathsf{US}$ has just claimed that it could not be global.
So if we accept $\mathsf{US}$, we must accept that we can only derive a first order set theory from the ordinal theory.
Also, there is a least ordinal such that every ordinal below it has an encoding for its power set, but the ``power set'' of the ordinal itself is an unencodable hyperclass.
It is easy to derive that this ordinal is actually a cardinal, and we will call it the \textbf{largeness cardinal}.
One may be concerned that $\mathsf{US}$ leads to some inconsistent results; we will exclude the possibility of that case.

\begin{theorem}
    If $\mathsf{ZFC}+$an inaccessible cardinal is consistent, so is the theory $\mathsf{OA}+\mathsf{MU}+\mathsf{TDC}+\mathsf{US}$; we may denote this theory as $\mathsf{MC}_B$.
    $\mathsf{OA}^++\mathsf{MU}+\mathsf{TDC}+\mathsf{US}$ is also consistent under the hypothesis; this theory will be called $\mathsf{MC}_B^+$.
\end{theorem}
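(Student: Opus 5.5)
We will build a model of $\mathsf{MC}_B^+$ inside $\mathsf{ZFC}$ with an inaccessible $\kappa$. Since every $\mathsf{MC}_B^+$ axiom other than $\mathsf{OA}^+$ is also an $\mathsf{MC}_B$ axiom, and $\mathsf{OA}\subseteq\mathsf{OA}^+$, this handles both claims at once.

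The structure has ordinal part $\kappa$ with true ordinal $+,\times$, and class part a carefully chosen family $\mathcal{C}\subseteq P(\kappa)$. The choice of $\mathcal{C}$ is what separates Type B from Type A. Taking $\mathcal{C}=P(\kappa)$ cannot work: the proof of the theorem on standard models shows that then a maximal regular encoding exists, and this contradicts $\mathsf{US}$. So we shrink the bounded part. Fix an infinite cardinal $\lambda<\kappa$, for instance $\lambda=\omega$. Let $M$ be a transitive model in which $\kappa$ remains inaccessible and which contains a class failing to be well-orderable in a suitable sense; the simplest choice is to add $\kappa$ Cohen subsets of $\lambda^+$ and pass to a symmetric submodel. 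We then take $\mathcal{C}$ to be the classes in a symmetric extension $N$ of $V_\kappa$: concretely, add $\kappa$ Cohen subsets of $\omega$ over $L_\kappa$ (or over $V$ with $\kappa$ inaccessible) and let $N$ be the symmetric model of height $\kappa$ generated by finite supports. The bounded classes of this second-order model are the bounded subsets of $\kappa$ lying in $N$, and $\mathcal{C}=P(\kappa)\cap N'$, where $N'$ is the corresponding symmetric extension at the class level (Kelley–Morse style).

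We verify the axioms in this order.
\begin{enumerate}
\item $\mathsf{OA}$ and $\mathsf{OA}^+$. Well-ordering, extensionality and arithmetic are absolute. Comprehension holds because $N'$ is a KM-type class extension. Replacement holds because $\kappa$ stays regular and symmetric forcing preserves the second-order replacement of the ground. The third generation principle holds because $\kappa$ is a limit cardinal in $N$.
\item $\mathsf{MU}$. Each bounded class $g\subseteq\beta<\kappa$ has, in $N$, only a set-sized collection of relevant sets below rank $\beta+\omega$, and each of these sets is well-orderable in $N$ (it lies in $V_\kappa^{N}$ with small support). So we can build a regular encoding of $H$-type that codes $g$ by a recursion of length $\kappa$. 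The delicate point is that this encoding must be a class of the model, i.e.\ symmetric. We obtain this by using a support that contains the support of $g$ and enumerating only sets whose support is included in it.
\item $\mathsf{US}$. Given any class $C$ in the model with all sections bounded, $C$ has a finite support $s$. Then every $C_\alpha$ is fixed by automorphisms fixing $s$, so any Cohen real outside $s$ is a bounded class (a subset of $\omega$) different from every $C_\alpha$.
\item $\mathsf{TDC}$. This follows from a closure argument: the symmetric system is arranged so that dependent choices of length $\kappa$ of classes can be made with a uniform support.
\end{enumerate}

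The main obstacle is the tension between $\mathsf{TDC}$ (and $\mathsf{MU}$) and $\mathsf{US}$. $\mathsf{TDC}$ demands long choice sequences inside the model, whereas $\mathsf{US}$ needs a global failure of enumerability. Finite-support symmetric models typically kill choice outright, which endangers both $\mathsf{TDC}$ and the collection argument. We would try first to choose the filter of supports so that the supports form a $\kappa$-complete ideal generated by sets of size $<\kappa$. Then any $<\kappa$-length dependent choice over symmetric objects has a bounded support, which is enough for $\mathsf{DC}$. For the full $\mathbf{Ord}$-length $\mathsf{TDC}$, we would reduce to the case where the witnesses $P^\alpha$ can be taken with supports growing in a continuous chain. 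The witness for $\mathsf{US}$ should then be a generic subset of $\kappa$, since a subset of $\omega$ would no longer escape the supports; a generic that is not in any class of support bounded below $\kappa$ serves this purpose, yet each of its bounded initial parts is a bounded class.

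If this balance fails, the fallback is to weaken the target and check $\mathsf{TDC}$ only for formulas whose witnesses are definable from a bounded support.
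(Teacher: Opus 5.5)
There is a genuine gap, and it is exactly the ``main obstacle'' you flag: none of the symmetric models you describe satisfies $\mathsf{TDC}$ and $\mathsf{US}$ together.

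Suppose the supports come from an ideal of sets of size $<\kappa$, finite supports being a special case. By the usual homogeneity argument, every class $P\subseteq\kappa$ of the symmetric model lies in $\mathbf{L}[G\upharpoonright s]$ for a support $s$ with $|s|<\kappa$, and there $2^\omega<\kappa$. Let $\varphi(\alpha,P^\alpha,\langle P^\beta\rangle_{\beta<\alpha})$ say that the $P^\beta$, $\beta\leq\alpha$, are pairwise distinct subsets of $\omega$.

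\begin{itemize}
\item The hypothesis of $\mathsf{TDC}$ holds. A sequence of distinct reals coded by a class with support $s$ can always be extended by a Cohen real indexed outside $s$; this is the same argument you use for $\mathsf{US}$.
\item The conclusion fails. It would be a class listing $\kappa$ distinct reals, which cannot lie in any $\mathbf{L}[G\upharpoonright s]$.
\item Growing supports along a continuous chain does not help. The union has size $\kappa$ and is not an admissible support.
\item Admitting full support does not help either. You are then simply in $\mathbf{L}[G]$ with $\kappa$ Cohen reals, where $2^{<\kappa}=\kappa$, so a single class enumerates all bounded classes and $\mathsf{US}$ fails.
\end{itemize}

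The same dichotomy applies to Cohen subsets of $\lambda^+$. Your replacement witness for $\mathsf{US}$, a generic subset of $\kappa$, is not a bounded class at all. Its bounded initial segments are ordinary bounded classes that may well be enumerated. Your fallback of weakening $\mathsf{TDC}$ abandons the statement.

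The root error is your opening claim that $\mathcal{C}=P(\kappa)$ cannot work. The standard-model theorem you invoke assumes $\mathsf{UE}$, which forces $\kappa$ to be a strong limit. It says nothing when $\kappa$ is only weakly inaccessible in the universe computing $P(\kappa)$.

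The paper keeps the full power set and full choice, and changes the universe instead. Over $\mathbf{L}$, with $\kappa$ the least inaccessible, it adds $\kappa^+$ Cohen reals. By c.c.c., $\kappa$ stays a regular limit cardinal in $\mathbf{L}[G]$, while $2^\lambda=\kappa^+$ for every $\lambda<\kappa$. In $(\kappa,P(\kappa);<,+,\times,\in)$ computed there:
\begin{itemize}
\item $\mathsf{TDC}$ is immediate from $\mathsf{AC}$ in $\mathbf{L}[G]$.
\item $\mathsf{US}$ is a counting argument: there are $\kappa^+$ bounded classes, but any class has only $\kappa$ sections.
\item $\mathsf{MU}$ holds because, for each bounded $x$, $\mathsf{GCH}$ in $\mathbf{L}[x]$ from $\sup x$ onward, together with $\kappa$ being a limit cardinal, makes $\kappa$ strongly inaccessible in $\mathbf{L}[x]$. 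Hence $L_\kappa[x]$ with its canonical well-order (compared first by rank) yields a regular encoding that codes $x$.
\end{itemize}

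This last point is also missing from your item 2. A regular encoding must encode a model of $\mathsf{ZFGC}^+$, including power set. So you need an inner model in which $\kappa$ is strongly inaccessible, not merely some well-orderable set-sized family of sets of small rank.
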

\begin{proof}
    Start from $\mathbf{L}$ with $\kappa$, the least inaccessible cardinal there.
    We consider $\operatorname{Add}(\omega,\kappa^+)=\{p\mid \text{$p$ is a partial function from $\omega\times\kappa^+$ to $2$ with }|\operatorname{dom}(p)|<\omega\}$.
    In $\mathbf{L}[G]$, $\kappa$ is still the least weakly inaccessible cardinal(because Cohen forcing is c.c.c.), and for every $\lambda<\kappa$, we have $2^\lambda=\kappa^+$($\kappa^+=(2^\omega)^{\mathbf{L}[G]}\leq(2^\lambda)^{\mathbf{L}[G]}\leq((\kappa^+)^\lambda)^{\mathbf{L}}=\kappa^+$), which means $\kappa$ is not (strongly) inaccessible.
    We will show that $(\kappa,P(\kappa);<,+,\times,\in)$ is a model of $\mathsf{MC}_B^+$.

    Most of these axioms can be checked directly; we will show that $\mathsf{MU}$ can be satisfied.
    For any $x\subseteq\alpha<\kappa$, we may notice that the set universe interpreted in the structure is actually $(H_\kappa,\in)$; since $x\in \mathbf{L}[x]$, if we could prove that $\kappa$ is inaccessible in $\mathbf{L}[x]$, $V_\kappa^{\mathbf{L}[x]}=L_\kappa[x]\ni x$ will be a subset of $H_\kappa$, and the canonical well-ordering of $\mathbf{L}[x]$ makes it a natural $\kappa$-sized model of $\mathsf{ZFGC}+<\text{ respects }\operatorname{rank}$.
    However, we all know that $\mathsf{GCH}$ holds in $\mathbf{L}[x]$ above $(|x|^+)^{\mathbf{L}[G]}$; since $\kappa$ is a limit cardinal in $\mathbf{L}[G]$, $|x|^{\mathbf{L[G]}}<\kappa$ implies $(|x|^+)^{\mathbf{L}[G]}<\kappa$, so $\kappa$ is strongly inaccessible in $\mathbf{L}[x]$, and the proof is done.

    $\mathsf{US}$ comes from an easy cardinality argument: every bounded class in this structure has cardinality $<\kappa$ and is $\in H_\kappa$, so a class of bounded classes is a $\kappa$-length sequence of sets in $H_\kappa$; however, $|H_\kappa|=\kappa^+$, so there are always some bounded classes that are not enumerated in the sequence.
\end{proof}
 
It is easy to observe that the largeness cardinal in the model constructed above is $\omega$.
If we want to change it, we could just add $\kappa^+$-many Cohen subsets to another cardinal; in fact, we can control the continuum function below that largeness cardinal by taking products with some other small forcing notions(relative to $\kappa$) if we wish.

Also, a model of $\mathsf{MC}_B+\neg$ ``every cardinal has a successor'' can be constructed by collapsing every cardinal below $\kappa$ to $\omega$ and adding $\kappa^+$-many Cohen reals, which makes $\kappa=\omega_1^{\mathbf{L}[G]}$ inaccessible to reals, and the proofs of $\mathsf{MU}$ and $\mathsf{US}$ are alike.

\subsection{The Power Set Dichotomy}

There is no standard model after combining $\mathsf{PS}$ with $\mathsf{UE}$, and adding $\mathsf{MU}$ and $\mathsf{TDC}$ leads to an unacceptable inconsistency.
This dichotomy shows that there are only two coherent ways to proceed: either one accepts that the universe of all sets satisfies $\mathsf{ZFGC}^2$, so that the power set operation is globally well-defined, or one accepts that there must be ordinals at which the power set operation fails to deliver the ``correct'' cardinal.
We refer to this as the \emph{Power Set Dichotomy}.
How can we make a choice in this dichotomy?

First, let us examine whether the arguments in support of the Power Set Axiom found in the original writings of Gödel are sufficient to incline us toward $\mathsf{MC}_A$.
In G\"odel’s later reflections, as reported by Wang\cite{Wang}, the formation of power set is associated with a ``jump'' to a higher level of cardinalities:

\begin{quote}
    8.2.13 The power-set operation involves a jump. In this second jump we consider not only the members of a set as given but also the process of selecting members from the set. Taking all possible ways of leaving out members of the set is a kind of "method" for producing all its subsets. We then feel that we can overview the collection of all these subsets as well. We idealize, for instance, the integers or the finite sets (a) to the possibility of an infinite totality, and (b) with omissions. So we get a concretely intuitive idea and then one goes on. There is no doubt in the mind that this idealization-to any extent whatsoever-is at the bottom of classical mathematics. (Compare 7.1.18 and 7.1.19.)
\end{quote}

This ``jump'' is most naturally understood as the passage from a set $x$ to its power set $P(x)$, which produces a strictly richer domain.
Unlike Cantor’s ordinal progression, this operation is not merely structural, but depends on the totality of subsets of $x$, and hence implicitly on the surrounding universe.

Cohen’s forcing method reveals a crucial feature of this situation: the size of the ``jump'' cannot be fixed independently of the ambient set-theoretic universe.
In particular, the enumeration of $P(x)$ is not determined locally, but varies with global properties of the universe.
This reflects the fact that the statement $x = P(y)$ involves a quantification over all subsets of $y$, and is therefore essentially impredicative.

From this perspective, one may ask whether such a jump genuinely requires the Power Set Axiom.
There is an alternative candidate: the Hartogs operation, which assigns to each set $x$ the least ordinal not embeddable into $x$, thereby producing a canonical successor cardinal without invoking the full power set.

From a G\"odelian point of view, one might attempt to argue that this distinction collapses under the assumption of $\mathsf{GCH}$.
If $\mathsf{GCH}$ holds, then the Hartogs number and the power set of $x$ are equinumerous, so that both constructions determine the same ``next stage''.
In this sense, the passage to a larger domain would appear to be uniquely determined.

This line of reasoning can be seen as a local form of the Limitation of Size principle: whenever one attempts to pass beyond a given set, all sufficiently canonical constructions yield the same result.
However, this generalization is highly problematic.
Since Limitation of Size is not first-order expressible, there is no reason to expect that the cardinals reflecting global features of the universe form a well-behaved class.
Indeed, many global properties of $V$ systematically fail to reflect to initial segments, and there is no clear argument that Limitation of Size should be exempt from such failures.

We therefore conclude that the Cantor-G\"odel style justification of the Power Set Axiom (and of $\mathsf{GCH}$) does not establish the full strength of the axiom.
At best, it supports a weaker principle: that every cardinal has a successor.
This is also consistent with our earlier technical results: the natural interpretation of the first‑order set‑theoretic fragment shared by $\mathsf{MC}_A$ and $\mathsf{MC}_B^+$ is precisely $\mathsf{ZFC}^- +$ “every cardinal has a successor”.

At this point, two further considerations support our preference for $\mathsf{MC}_B^+$ over $\mathsf{MC}_B$: one extrinsic and one intrinsic.

From an extrinsic perspective, it is natural to require that the universe of sets be \emph{rigid}, in the sense that it admits no nontrivial elementary self-embeddings, as is supported by almost all candidate characterizations of the true set-theoretic universe.
Indeed, it is not difficult to see that $\mathsf{MC}_A$ proves that there is no nontrivial elementary embedding from $V$ to $V$. 
Similarly, $\mathsf{MC}_B^+$ rules out any such embedding that can be coded as a class in the corresponding ordinal theory (cf.\ \cite[Thm.\ 5.2]{Matthews}). 
By contrast, a straightforward translation of \cite[Thm.\ 2.2]{Matthews} shows that, assuming the consistency of $\mathsf{ZFC}+I_1$, the system $\mathsf{MC}_B$ admits a nontrivial elementary embedding of $V$ into itself (albeit non-cofinal) which can be coded as a class. 
This indicates that the universe described by $\mathsf{MC}_B$ is not rigid, a feature that is difficult to reconcile with the idea of a well-determined totality of sets.

From an intrinsic perspective, our analysis has aimed to show that the notion of cardinal number is subordinate to that of ordinal number, and that cardinal arithmetic should accordingly be understood as derivative from ordinal arithmetic. 
This view is already explicit in Cantor’s original conception. 
On the one hand, Cantor repeatedly emphasizes that there is no largest transfinite number; every number class gives rise to a strictly larger one, and the sequence of transfinite numbers is in principle unbounded (cf.\ \cite{Cantor1883}, esp.\ the discussion of the unending progression of number classes). 
On the other hand, cardinal numbers are introduced only after the theory of order-types and are obtained by a further abstraction from well-ordered sets; in this sense, they depend conceptually on ordinal structure (cf.\ \cite{Cantor1895}). 
Thus, the idea that cardinals are secondary to ordinals is not an artifact of later set theory, but reflects a fundamental feature of Cantor’s own framework.

From this standpoint, it is natural to regard $\mathsf{OA}^+$ as providing the complete theory of ordinal arithmetic, with cardinal structure emerging only as a derived notion. 
This reinforces the idea that the passage to higher domains should be governed by ordinal principles rather than by independent cardinal postulates, and hence supports the preference for $\mathsf{MC}_B^+$ as the more faithful realization of the Cantorian picture.

At this point, let us return to the main line of argument and ask whether there is any alternative way to resolve the dichotomy.

One possible maneuver would be to appeal to a form of potentialism, according to which the choice between the two options may legitimately be suspended. 
This idea rests on the following observation: although $\mathbf{Ord}$ is quantified over in our second-order theory of ordinals, it is not itself admitted as a mathematical object (that is, as a set). 
Accordingly, it is natural to regard the first-order set theory induced by our second-order ordinal theory as describing only an initial segment of the true universe of sets. 
On this view, we are characterizing such initial segments via principles akin to reflection, and then extrapolating from these local descriptions to a global picture of the universe.

From this perspective, it is not difficult to see—by a standard forcing argument—that, assuming the existence of two inaccessible cardinals, one may have cardinals $\kappa < \lambda$ such that $(\kappa, P(\kappa)) \vDash \mathsf{MC}_A$ while $(\lambda, P(\lambda)) \vDash \mathsf{MC}_B^+$; and conversely, one may also arrange that $(\kappa, P(\kappa)) \vDash \mathsf{MC}_B^+$ while $(\lambda, P(\lambda)) \vDash \mathsf{MC}_A$. 
This suggests that Type A and Type B universes can transform into one another along further ordinal extensions lying beyond the current scope of our theory. 
A potentialist may therefore conclude that no definitive choice between the two is possible.

However, we shall argue that this line of reasoning is ultimately untenable.

For convenience, let us refer to such extensions beyond the present theoretical scope as \emph{hyper-iterations} of the universe. 
Suppose first that a given universe cannot be hyper-iterated into a larger universe of the other type. 
In that case, we already have decisive grounds for regarding its type as correct, and hence as stable under all further extensions.

Assume, then, that this does not occur. 
Consider the behavior of largeness cardinals in a Type B universe under hyper-iteration. 
This is a local question: once we pass to the next Type B universe, the status of such cardinals is determined within that universe and will not subsequently change. 
Hence, the question has a definite truth value and cannot be indefinitely suspended by a potentialist stance.

If it turns out that, for every Type B universe, its largeness cardinals fail to remain such after hyper-iteration, then each of them must be dominated by larger cardinals in the extended universe. 
In that case, their power sets must in fact be sets (i.e., bounded classes), and the original Type B universe failed to recognize this only because the ordinal extension was not sufficiently developed. 
But if this situation occurs universally, it follows that the power set of every largeness cardinal is genuinely a set, and hence that $\mathsf{UE}$ is justified; this, in turn, implies that $\mathsf{MC}_A$ is the correct theory.

On the other hand, if this situation does not eventually occur, then there must exist a maximal largeness cardinal such that all subsequent Type B universes take it to be their largeness cardinal. 
This entails that no such universe can be further hyper-iterated into a Type A universe, and hence that $\mathsf{MC}_B^+$ is ultimately correct.

Therefore, even when hyper-iteration is taken into account, the dichotomy cannot be avoided: one must still make a definite choice between Type A and Type B universes in order to determine the correct theory.

Moreover, this potentialist perspective also provides us with a fairly good justification for $\mathsf{TDC}$.
Since we must add the two principles $\mathsf{MU}$ and $\mathsf{TDC}$ to the theory in order to degrade the contradiction from the standard semantics of second‑order logic to the syntax of second‑order logic, it is certainly necessary to carry out some justification for both.
$\mathsf{MU}$ requires little attention, because we have already established the status of $\mathsf{SC}^{\text{reg}}$; in a second‑order context, $\mathsf{MU}$ is simply a reaffirmation of the correctness of $\mathsf{SC}^{\text{reg}}$.

As for $\mathsf{TDC}$, we might as well invoke Cantor’s principle of Free Conceptual Construction once again.
First, according to our potentialist perspective, the class that $\mathsf{TDC}$ seeks to construct should also be a legitimate mathematical object; thus, the existence of this mathematical object is transformed into the question of whether it can be coherently embedded into the previous context—and this is precisely what the first half of the axiom guarantees: it exists coherently in every local context that could be spoken of.
Therefore, by the maximality that the mathematical universe ought to possess, this object should also exist as a whole, which just justifies $\mathsf{TDC}$.

\subsection{A Glance at Large Cardinals}

As G\"odel's program may suggest, if we can't decide which picture of the set universe is correct, we should consider if some large cardinal hypothesis can help us with it.
Immediately we can have, if strongly inaccessible cardinal is cofinal in our universe, then the power set axiom holds everywhere.
However, the definition of strongly inaccessibles is just linked to the power set axiom, and a more neutral way for claiming this is clearly to have that strongly inaccessibles are cofinal below the largeness cardinal, which is innocent for deciding the power set axiom worldwidely.
Another way is to have that weakly inaccessible cardinal is cofinal in our universe, but it is also innocent.

We may look at larger cardinals.
Notice that, taking $\kappa\to(\kappa)^2_2$ as the definition of weakly compact cardinals, we can have an axiom claiming that $\mathbf{Ord}$ is weakly compact: for every class $F$, if it can be decoded as a binary coloring function $\mathbf{Ord}^2\to 2$, then there is a cofinal class $H$ that is homogeneous according to this function.
\begin{theorem}
    $\mathbf{Ord}$ is weakly compact implies $\mathsf{UE}$ in $\mathsf{OA}^++\mathsf{TDC}$.
\end{theorem}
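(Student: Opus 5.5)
The plan is to imitate the classical proof that a weakly compact cardinal $\kappa$ is a strong limit: if $2^\lambda\geq\kappa$ for some $\lambda<\kappa$, one colours pairs of $\kappa$ distinct subsets of $\lambda$ by their lexicographic comparison. A homogeneous set of size $\kappa$ then yields a lexicographically monotone well-ordered family of subsets of $\lambda$ that is too long. Here $\mathbf{Ord}$ plays the role of $\kappa$, and $\mathsf{TDC}$ replaces the choice of the $\kappa$ distinct subsets. As observed after the statement of $\mathsf{UE}$, it suffices to show that for every ordinal $\alpha$ the hyperclass $P(\alpha)$ is encoded by a bounded class. So I argue by contradiction and fix $\alpha$ such that no bounded class encodes $P(\alpha)$.

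\emph{Step 1: an $\mathbf{Ord}$-sequence of distinct subsets of $\alpha$.} I apply $\mathsf{TDC}$ to the property $\varphi(\xi,P^\xi,\langle P^\beta\rangle_{\beta<\xi})$ saying that $P^\xi\subseteq\alpha$, that $P^\xi\neq P^\beta$ for all $\beta<\xi$, and that the same holds at every earlier stage. The key point is that a given sequence $\langle P^\beta\rangle_{\beta<\xi}$ of subsets of $\alpha$ is a bounded class, since its elements $\operatorname{Pair}(\beta,\gamma)$ with $\beta<\xi$ and $\gamma<\alpha$ are bounded. If every subset of $\alpha$ occurred among its sections, this bounded class would encode $P(\alpha)$, contrary to the assumption. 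Hence a new $P^\xi$ exists, and the hypothesis of $\mathsf{TDC}$ is met. $\mathsf{TDC}$ then produces a class $P$ whose sections $P_\xi$ are pairwise distinct subsets of $\alpha$ for all $\xi\in\mathbf{Ord}$.

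\emph{Step 2: colouring and homogeneity.} Using comprehension and $\operatorname{Pair}$, I define a class $F$ coding $\mathbf{Ord}^2\to 2$: for $\xi<\eta$, set $F(\xi,\eta)=0$ if $P_\xi<_{\mathrm{lex}}P_\eta$ and $F(\xi,\eta)=1$ otherwise. Here $<_{\mathrm{lex}}$ compares characteristic functions at the least point $\delta<\alpha$ where they differ, and that least point exists by well-ordering. Weak compactness of $\mathbf{Ord}$ gives a cofinal homogeneous class $H$. By the third generation principle in $\mathsf{OA}^+$, there is a cardinal $\mu>\alpha$, i.e.\ an ordinal admitting no coded bijection onto any ordinal $\leq\alpha$ (take $\mu=\alpha^+$). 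By replacement, the first $\mu$ elements of $H$ form a bounded set. This gives a strictly lex-monotone sequence $\langle x_\nu\rangle_{\nu<\mu}$ of subsets of $\alpha$, coded as a bounded class. Without loss of generality it is increasing; the decreasing case is symmetric after complementing.

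\emph{Step 3: the combinatorial contradiction.} The final step is the classical lemma that a well-ordered subset of $(2^\alpha,<_{\mathrm{lex}})$ has size at most $|\alpha|$ (for infinite $\alpha$), carried out inside $\mathsf{OA}^+$ with bounded classes coding functions via $\operatorname{Pair}$. For each $\nu$, let $\delta_\nu<\alpha$ be the first place where $x_\nu$ and $x_{\nu+1}$ differ, so $x_\nu(\delta_\nu)=0$ and $x_{\nu+1}(\delta_\nu)=1$. Since $\mu>\alpha$ is a cardinal, pigeonhole gives a fixed $\delta$ and a subfamily of size $\mu$ with $\delta_\nu=\delta$. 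I then proceed by induction on $\alpha$, or equivalently analyse the binary tree of initial segments $x_\nu\restriction\delta$, to derive a contradiction. The induction restricts to initial segments $x\restriction\delta$, whose lex order is coarser but still well-ordered. I expect this step to be the main obstacle: it is a genuinely set-theoretic cardinality argument that has to be faithfully transcribed into the second-order ordinal language. In particular, one must verify that the pigeonhole and cardinal-successor arguments only use bijections coded by bounded classes, which is exactly what $\mathsf{OA}^+$ provides. Once it is done, the contradiction shows that $P(\alpha)$ is encodable for every $\alpha$, i.e.\ $\mathsf{UE}$ holds.
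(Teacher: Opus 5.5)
Your architecture is the paper's. $\mathsf{TDC}$ produces an $\mathbf{Ord}$-sequence of pairwise distinct subsets of the offending ordinal. The lexicographic colouring and weak compactness of $\mathbf{Ord}$ give a lex-monotone cofinal subfamily, which you cut to length $\alpha^+$. The contradiction is then the classical fact that $(2^\alpha,<_{\mathrm{lex}})$ has no strictly monotone sequence of length $\alpha^+$. Steps 1 and 2 are fine.

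The genuine gap is Step 3, which you only announce. After pigeonholing you have a set $A\subseteq\mu$ of size $\mu$ with $\delta_\nu=\delta$ for all $\nu\in A$. The remark that the lex order on the restrictions $x\upharpoonright\delta$ is ``coarser but still well-ordered'' gives nothing, because a priori the $x_\nu\upharpoonright\delta$ for $\nu\in A$ could collapse to few values. What you need is that these restrictions are pairwise distinct; the paper's terse ``now $\theta$ is not the least one'' silently relies on exactly this. Suppose $\nu<\nu'$ lie in $A$ and $x_\nu\upharpoonright\delta=x_{\nu'}\upharpoonright\delta$. Then $x_{\nu+1}\upharpoonright\delta=x_\nu\upharpoonright\delta=x_{\nu'}\upharpoonright\delta$. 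At the same time $x_{\nu+1}(\delta)=1$, and $x_{\nu'}(\delta)=0$ because $\delta_{\nu'}=\delta$. So $\nu+1\neq\nu'$, hence $\nu+1<\nu'$, and yet $x_{\nu'}<_{\mathrm{lex}}x_{\nu+1}$, contradicting monotonicity. Since restriction preserves $\leq_{\mathrm{lex}}$, the first $\mu$ elements of $A$ then give a strictly increasing $\mu$-sequence in $2^\delta$ with $\delta<\alpha$.

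This also fixes the form of your induction. The lemma you quote holds only for infinite $\alpha$, while an induction on $\alpha$ passes through finite stages, so it cannot serve as the induction hypothesis. Instead, fix $\mu=\alpha^+$ and prove by induction on $\delta\leq\alpha$ that $2^\delta$ carries no strictly lex-increasing $\mu$-sequence. The case $\delta=0$ is trivial, and the inductive step is the argument above. Equivalently, take the least $\delta$ admitting such a sequence; this is exactly the paper's minimal $\theta$ with $|\{f^\nu\upharpoonright\theta\}|=\kappa^+$.

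Finally, your pigeonhole step needs $\mu=\alpha^+$ to be regular, which is not automatic without choice. It holds here because $\mathsf{TDC}$ implies $\mathsf{CC}$ (the theorem proved earlier). That lets you choose, uniformly in $\delta'<\alpha$, injections of the fibres into $\alpha$ if all fibres were small. With these points supplied, your argument is complete and coincides with the paper's proof.
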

\begin{proof}
    Suppose that some uniformly bounded hyperclass cannot be encoded as a single bounded class.
    Therefore, for some fixed cardinal $\kappa$, for every enumeration of sets bounded by $\kappa$ of ordinal-length, there is another set, also bounded by $\kappa$, not enumerated by the previous one.
    From $\mathsf{TDC}$, we can get a class $C$, with every slice of it being a set bounded by $\kappa$, pairwise distinct.
    Consider the following lexicographical order $<_{lex}$ on all these slices: put $\alpha<_{lex}\beta$ if and only if $\min(C^{\alpha}\backslash C^\beta)<\min(C^\beta\backslash C^\alpha)$.
    Now set $F(\alpha,\beta)=0$ if and only if $\alpha<\beta$ and $\alpha<_{lex}\beta$ or vise versa.
    $\mathbf{Ord}$ is weakly compact means that there is a homogeneous cofinal class $D$, or equivalently, $<_{lex}$ has an $\mathbf{Ord}-$length increasing or decreasing sequence.
    We will derive a contradiction from $<_{lex}$ having an $\kappa^+-$length increasing sequence, and the proof for the other case is just alike.

    Assume without loss of generality that $p^\alpha(\alpha<\kappa^+)$ is the enumeration of this $\kappa^+-$length increasing sequence; we consider $p^\alpha$ as a function $f^\alpha:\kappa\to 2$, with $\theta\in p^\alpha\leftrightarrow f(\theta)=1$ for every $\theta<\kappa$.
    Consider $x_\theta=\{f^\alpha\upharpoonright\theta\mid\alpha<\kappa^+\}$, the encodability of this hyperclass immediately follows from replacement; we can consider the minimum cardinal that is sufficient for encoding it, which is actually the cardinality of this family of sets.
    If $|x_\theta|\leq\kappa$ for all $\theta$, $\{f^\alpha\mid\alpha<\kappa^+\}$ must have a cardinality less than or equal to $\kappa$, a contradiction; therefore, there exists some minimum $\theta$ ensuring $|x_\theta|=\kappa^+$.
    Now we can assume again without loss of generality that $f^\alpha\upharpoonright\theta$ are pairwise distinct.
    If $\alpha<\kappa^+$, $p^\alpha<_{lex}p^{\alpha+1}$, so there exists only one $\beta_\alpha$ such that $f^\alpha\upharpoonright\beta_\alpha=f^{\alpha+1}\upharpoonright\beta_\alpha$ and $f^\alpha(\beta_\alpha)=1>0=f^{\alpha+1}(\beta_\alpha)$.
    Obviously, $\beta_\alpha$ are all less than $\theta$, so $\beta:\kappa^+\to\theta$ must give us some cofinal subset $x$ of $\kappa^+$ on which $\beta$ takes the same value $\theta'$, and now $\theta$ is not the least one having our desired property; a contradiction.
\end{proof}

Similar theorems can't be done if we take the assumptions that $\mathbf{Ord}$ is weakly Mahlo, or any strengthening by iterating the property of weakly inaccessible from below.
Just as we define $\kappa$ to be weakly Mahlo if weakly inaccessibles are cofinal below $\kappa$, we can define $1$-weakly Mahlo, $\alpha$-weakly Mahlo and, after all, weakly totally Mahlo, which states that for every $\alpha<\kappa$, $\alpha$-weakly Mahlo cardinals are cofinal below $\kappa$.
\begin{theorem}
    $\mathbf{Ord}$ is weakly totally Mahlo does not imply $\mathbf{UE}$ in $\mathsf{OA}^++\mathsf{TDC}$.
\end{theorem}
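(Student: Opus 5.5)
We prove the non-implication with a model, following the pattern of the consistency proof for $\mathsf{MC}_B$. The plan is to find a cardinal $\kappa$ that is weakly totally Mahlo but not strong limit, and to check that $(\kappa,P(\kappa);<,+,\times,\in)$ satisfies $\mathsf{OA}^++\mathsf{TDC}$ together with ``$\mathbf{Ord}$ is weakly totally Mahlo'' but fails $\mathsf{UE}$. We work over $\mathbf{L}$ and assume $\kappa$ is weakly compact in $\mathbf{L}$ (or just totally Mahlo, which already suffices there since $\mathsf{GCH}$ holds). In $\mathbf{L}$, $\kappa$ is then $\alpha$-Mahlo for every $\alpha<\kappa$, so the $\alpha$-weakly Mahlo cardinals are cofinal (indeed stationary) below $\kappa$ for each $\alpha<\kappa$. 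We force with $\operatorname{Add}(\omega,\kappa^+)$ exactly as in the earlier theorem.

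The first step is to show that the forcing preserves everything relevant. Cohen forcing is c.c.c., so it preserves cofinalities and cardinals, and also preserves stationary subsets of every uncountable regular cardinal. Hence being weakly inaccessible is preserved, and by induction on $\alpha$, being $\alpha$-weakly Mahlo is preserved. The induction is formulated with stationary sets, which is why stationarity preservation matters. In particular, every cardinal that was $\alpha$-weakly Mahlo in $\mathbf{L}$ remains so in $\mathbf{L}[G]$, and since cofinality is absolute here, the cofinality clauses inside $(\kappa,P(\kappa))$ agree with $\mathbf{L}[G]$. Thus the structure satisfies ``$\mathbf{Ord}$ is weakly totally Mahlo''.

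The next step is to verify $\mathsf{OA}^++\mathsf{TDC}$ in $(\kappa,P(\kappa))^{\mathbf{L}[G]}$. Replacement holds because $\kappa$ is regular. The third generation principle holds because $\kappa$ is a limit cardinal. Comprehension holds trivially, since the classes are the full $P(\kappa)$. For $\mathsf{TDC}$ we use $\mathsf{AC}$ in $\mathbf{L}[G]$: we build $P$ by transfinite recursion of length $\kappa$, choosing each $P^\alpha$ via a fixed well-order of $P(\kappa)$, and the hypothesis of $\mathsf{TDC}$ keeps the recursion alive at every stage. This is routine and parallels what the earlier consistency proof took for granted.

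Finally we show that $\mathsf{UE}$ fails. The hyperclass $P(\omega)$ is uniformly bounded (by $\omega$), but $2^\omega=\kappa^+>\kappa$ in $\mathbf{L}[G]$. Encoding $P(\omega)$ as a bounded class $c\subseteq\kappa$ would require distinct sections $c_\alpha$ for each real, giving an injection of $P(\omega)$ into some $\gamma<\kappa$, hence $|P(\omega)|<\kappa$, a contradiction.

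The main obstacle is the preservation of $\alpha$-weak Mahloness for all $\alpha<\kappa$ under the forcing. The definition iterates ``cofinally many below'', and at limit levels a naive cofinality-only definition may not be robust. I would first try the clean route: fix the definition via stationarity, use that c.c.c. forcing preserves stationary sets and regularity, and argue by induction on $\alpha$ that the set of $\alpha$-weakly Mahlo cardinals below any regular $\lambda$ computed in $\mathbf{L}[G]$ contains the one computed in $\mathbf{L}$.
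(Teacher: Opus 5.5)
Your proposal is correct and takes the same route as the paper. The paper only asserts that standard forcing yields a weakly totally Mahlo $\kappa$ that is not strongly inaccessible and then reads off $(\kappa,P(\kappa))$; you make this concrete by adding $\kappa^+$ Cohen reals over $\mathbf{L}$, exactly as in the $\mathsf{MC}_B$ consistency proof, and you refute $\mathsf{UE}$ directly at the hyperclass $P(\omega)$ using $2^\omega=\kappa^+$. Note that the paper defines the weak Mahlo hierarchy by cofinality, not stationarity, so c.c.c. preservation of cardinals and cofinalities already settles the step you flag as the main obstacle; your stationarity-based version also works.
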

\begin{proof}
    By standard forcing techniques, we could create a weakly totally Mahlo cardinal that is weakly inaccessible but not strongly inaccessible.
    The rest of the proof is just translating $V_\kappa$ into a $\mathsf{OA}^++\mathsf{TDC}-$structure as we've done several times before.
\end{proof}

However, how can we convince that weakly compact is a suitable large cardinal assumption, especially for $\mathbf{Ord}$? The preceding argument shows that the weak compactness of $\mathbf{Ord}$ entails $\mathsf{UE}$ and thus forces the universe into the Type‑A picture. But this very fact raises a suspicion: the assumption of $\mathbf{Ord}$ being weakly compact is not neutral with respect to the Power Set Dichotomy. Indeed, the assertion that every binary class‑function on $\mathbf{Ord}$ admits a homogeneous cofinal class is a strong homogeneity requirement on the universe. It says that the ordinals, together with all classes, cannot be arranged in a highly chaotic manner; there is always a thread of uniformity. By contrast, the intuition behind the Type‑B universe is precisely anti‑homogeneous: we want the universe of sets to be so rich and irregular that no ordinal‑length tower can exhaust all bounded classes. The chaotic abundance of subsets that characterizes the Type‑B picture is diametrically opposed to the homogeneous orderliness demanded by weak compactness. Hence, postulating $\mathbf{Ord}$ to be weakly compact is tantamount to ruling out the Type‑B alternative from the outset; it is not a hypothesis that can be weighed independently of the dichotomy.

If we seek a large cardinal principle that does not already encode a commitment to the power set axiom, we might instead consider a weaker property that is equiconsistent with weak compactness but avoids the homogeneity consequences. A natural candidate is the tree property, parallel to the way we earlier replaced strong inaccessibility by weak inaccessibility. It would allow the chaotic structure required by the Type‑B universe, while still expressing a form of compactness that might be acceptable from a maximality standpoint. Consequently, even the most natural large cardinal postulates for $\mathbf{Ord}$ do not force a resolution of the Power Set Dichotomy; the choice between Type‑A and Type‑B universes remains open on the basis of large cardinal considerations alone.

\section{Conclusion}

Let us summarize the current situation.
Based on Cantor's Thesis on Ordinals as we have read from Cantor's writings, we have arrived at two conclusions.
First, we can select an enumeration of bounded sets of ordinals to obtain an appropriate number (i.e., as many as the ordinals) of bounded sets of ordinals, and thereby provide a complete first‑order characterization of the theory of ordinals, namely the $\mathsf{SC}^{\text{reg}}$ system.
Second, in order to study full first‑order set theory, we have examined two possible second‑order theories of ordinals: one stating that sets are as many as ordinals, yielding $\mathsf{MC}_A$; the other stating that sets are far more numerous than ordinals, yielding $\mathsf{MC}_B^+$.
These two possible theories are incompatible. By deriving this incompatibility from the global level down to the local level in our theoretical development, we arrive at the Power Set Dichotomy: the only issue on which we must take a stance is whether the power set of every ordinal is a set.
Through philosophical arguments, we have shown that neither side of this dichotomy has yet received a satisfactory philosophical justification, but we have also shown that this dichotomy cannot be left unresolved.
Therefore, perhaps we can summarize the work we have accomplished in the following conclusion: $\mathsf{ZFC}^-+$ ``every cardinal has a successor'' is a fragment of true first‑order set theory, and it is necessary to choose between two possible conceptions of the universe that diverge in their attitude toward the Power Set Axiom.

\appendix

\section{On definablilty of G\"odel pairing function}

We will prove the following technical theorems here.
\begin{theorem}
    $\operatorname{Pair}$ is definable using addition and multiplication.
\end{theorem}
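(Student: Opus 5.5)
The plan is to give an explicit closed-form description of $\operatorname{Pair}$ in terms of an auxiliary one-place function $\Gamma$, and then to show that $\Gamma$ is definable in $\mathsf{OA}$ by a recursion whose graph is coded with $+$ and $\times$ only. This avoids the circularity of using $\operatorname{Pair}$ to code the recursion that defines $\operatorname{Pair}$.

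For an ordinal $\gamma$, let $\Gamma(\gamma)$ be the order type of the pairs $(\xi,\eta)$ with $\max(\xi,\eta)<\gamma$ under the canonical order. The pairs with maximum exactly $\gamma$ come in the order $(0,\gamma),(1,\gamma),\dots,(\xi,\gamma),\dots$ for $\xi<\gamma$, followed by $(\gamma,0),\dots,(\gamma,\gamma)$. With this convention for the lexicographic comparison inside a block (the other convention only changes the bookkeeping), we get for $\gamma=\max(\alpha,\beta)$:
$$\operatorname{Pair}(\alpha,\beta)=\begin{cases}\Gamma(\gamma)+\alpha & \text{if }\alpha<\beta=\gamma,\\ \Gamma(\gamma)+\gamma+\beta & \text{if }\alpha=\gamma.\end{cases}$$
This identity follows by counting the initial segment block by block. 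The same counting gives the recursion
$$\Gamma(0)=0,\qquad \Gamma(\gamma+1)=\Gamma(\gamma)+\gamma+\gamma+1,\qquad \Gamma(\lambda)=\sup_{\gamma<\lambda}\Gamma(\gamma)\ \text{for limit }\lambda.$$
So the whole task reduces to defining $\Gamma$ from $<,+,\times$.

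To define $\Gamma$, I would code a function $g:\gamma+1\to\delta$ as the bounded class $\{\delta\times\xi+g(\xi)\mid\xi\le\gamma\}$. This coding is injective and decodable by the division algorithm $\delta\times\xi+\eta=\delta\times\xi'+\eta'\wedge\eta,\eta'<\delta\to\xi=\xi'\wedge\eta=\eta'$. That algorithm, and the needed monotonicity facts for $+$ and $\times$, must first be proved in $\mathsf{OA}$ from the recursive Arithmetic axioms by induction, using well-ordering on comprehension classes. Then set $\Gamma(\gamma)=\rho$ iff there exist $\delta$ and a bounded class $x$ such that $x$ codes, with modulus $\delta$, a function $g$ on $\gamma+1$ satisfying the three recursion clauses at every point, and $g(\gamma)=\rho$. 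Uniqueness follows by a least-counterexample argument: two such approximations must agree everywhere.

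The main obstacle is existence, proved by induction on $\gamma$. The least $\gamma$ lacking an approximation cannot be a successor, since one can extend the approximation for its predecessor by a single point, re-coding with a larger modulus $\delta$. At a limit $\gamma$, I would first use uniqueness to obtain the values $\Gamma(\xi)$ for all $\xi<\gamma$ as a single-valued formula. Replacement then gives a bound $\delta$ on these values together with their supremum. Finally, bounded comprehension applied to the formula ``$\theta=\delta\times\xi+\rho$ with $\xi\le\gamma$ and $\rho$ the value at $\xi$'' yields the glued code, which meets the limit clause at $\gamma$. Substituting the resulting definition of $\Gamma$ into the displayed closed form then defines $\operatorname{Pair}$. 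A last induction confirms that this definable function is the order isomorphism onto $\mathbf{Ord}$, and in particular that its value does not depend on any choice of $E$.
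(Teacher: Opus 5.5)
Your proposal is correct, and it takes a genuinely different route from the paper.

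\textbf{The paper's route.} The paper does not code any recursion. It writes down an explicit, class-free closed form for a one-place diagonal function $f$ by a case analysis on the Cantor normal form, using the first-order definable notions of additively, multiplicatively and ``semi-multiplicatively'' indecomposable ordinals. It then checks by induction that this closed form satisfies the defining recursion, and finally recovers the off-diagonal values of $\operatorname{Pair}$ from $f$.

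\textbf{What the paper's route buys.} When its bookkeeping is right, it gives a definition that is first-order over $(\mathbf{Ord},<,+,\times)$ with no class quantifiers. Appendix B relies on exactly this when it asserts that the sequence coding is definable by first-order formulas over $\langle<,+,\times\rangle$. It also makes $\operatorname{Pair}$ depend only on the ordinal arithmetic, not on which classes exist.

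\textbf{What your route gives.} Your definition of $\Gamma$ contains an existential quantifier over bounded classes. Your existence proof applies Replacement and Comprehension to formulas containing such quantifiers, which is legitimate in $\mathsf{OA}$. So what you actually establish is definability in the monadic second-order language of $\mathsf{OA}$, and you should say so explicitly. With one small supplement this suffices for every use the paper makes of $\operatorname{Pair}$. Where a genuinely first-order formula is needed (e.g.\ formulas decoding $E$ inside the truth predicate of Appendix B), pass the class $R=\{\Gamma(\gamma)\mid\gamma\in\mathbf{Ord}\}$ as a parameter. This works because $\Gamma(\gamma)=\rho$ iff $\rho\in R$ and the least element of $R$ above $\rho$ is $\rho+\gamma+\gamma+1$.

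\textbf{What your route buys.} Your argument is uniform and easy to verify. It needs only the division algorithm, left cancellation and monotonicity in $\mathsf{OA}$, whereas the normal-form case analysis is delicate. In particular, your forward formula $\Gamma(\gamma)+\gamma+\beta$ avoids the paper's final step. That step extracts $\operatorname{Pair}(\alpha,\beta)$ as ``the unique $\delta$ with $\delta+\gamma=f(\alpha)$''. This equation does not determine $\delta$ once $\gamma$ is infinite, because right cancellation fails ($0+\omega=1+\omega$).
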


\begin{proof}
We first derive the definition of the special case $f(\alpha)=\operatorname{Pair}(\alpha,\alpha)$.

It is straightforward to see that $f(\alpha)$ is determined by the recursive relations
\(f(0)=0\),
\(f(\alpha+1)=f(\alpha)+1+2\alpha\),
and
\(f(\alpha)=\bigcup_{\beta<\alpha}f(\beta)\) for limit ordinals.

\(0\) is obviously definable; some other trivially definable predicates are:
\[
\begin{aligned}
\operatorname{isSucc}(\alpha) &:= \exists\beta<\alpha\forall\gamma<\alpha(\gamma\leq\beta),\,\,
\operatorname{isLim}(\alpha) := \neg\operatorname{isSucc}(\alpha),\\
\operatorname{isNat}(\alpha) &:= \forall\beta<\alpha\bigl(\operatorname{isSucc}(\beta)\lor\beta=0\bigr)\wedge\operatorname{isSucc}(\alpha).
\end{aligned}
\]
Thus \(\omega\) is also a definable constant.

We further define
\[
\begin{aligned}
\operatorname{isAddIndecomposable}(\alpha) &:= \forall\beta,\gamma<\alpha(\beta+\gamma<\alpha),\\
\operatorname{isMulIndecomposable}(\alpha) &:= \forall\beta,\gamma<\alpha(\beta\times\gamma<\alpha).
\end{aligned}
\]

By well-known results (see, e.g., Jech), an ordinal \(\alpha\) is additively indecomposable if and only if it is of the form \(\omega^\theta\), and it is multiplicatively indecomposable if and only if this \(\theta\) is additively indecomposable.

We can define \(\omega^\omega\) as the smallest multiplicatively indecomposable ordinal greater than \(\omega\). Then all \(\omega^n\) (where \(n\) is a natural number) can be defined as all additively indecomposable ordinals less than \(\omega^\omega\).

Furthermore, we may define ordinals of the form \(\omega^{\omega^\theta\times n}\), which we call \textbf{semi-multiplicatively indecomposable ordinals}. An ordinal \(\alpha\) is such an ordinal if and only if it is additively indecomposable and, letting \(\alpha'\) be the largest multiplicatively indecomposable ordinal not exceeding \(\alpha\), for any additively indecomposable ordinal \(\gamma<\alpha'\), if there exists an ordinal \(\beta\) such that \(\beta\times\gamma=\alpha\), then \(\gamma=1\).

This follows from the fact that every additively indecomposable ordinal has the form \(\omega^{\omega^{\theta_1}\times n_1+\xi}\) with \(\xi<\omega^{\theta_1}\). The largest multiplicatively indecomposable ordinal not exceeding it is then \(\omega^{\omega^{\theta_1}}\). Suppose for contradiction that there exists an additively indecomposable ordinal \(\gamma=\omega^{\gamma'}\) and an ordinal \(\beta=\omega^{\beta'}\) with \(0<\gamma'<\omega^{\beta_1}\) such that \(\beta\times\gamma=\alpha\). Then \(\alpha=\omega^{\beta'+\gamma'}\), and \(\gamma'>0\) imply \(\xi>0\). The existence of \(\beta\) given \(\gamma\) is straightforward.

We now prove that \(f(\alpha)\) is defined by the following case distinction:
\begin{enumerate}
    \item If \(\operatorname{isNat}(\alpha)\), then \(f(\alpha)=\alpha\times\alpha\).

    \item Otherwise, let \(\theta\) be the largest additively indecomposable ordinal not exceeding \(\alpha\). It is easy to verify that there exists a natural number \(n\) and an ordinal \(\beta<\theta\) such that \(\alpha=\theta\times n+\beta\), with \(\theta\geq\omega\) and \(n\geq 1\).
        \begin{enumerate}
            \item If \(\beta=0\):
                \begin{itemize}
                    \item If \(n>1\), then \(f(\alpha)=\theta\times\theta\times(n-1)\).
                    \item If \(n=1\), we further case on \(\alpha=\theta\):
                        \begin{itemize}
                            \item If \(\theta\) is multiplicatively indecomposable, then \(f(\theta)=\theta\).
                            \item If \(\theta\) is not multiplicatively indecomposable and there exists a largest additively indecomposable ordinal \(\theta'<\theta\), then \(f(\alpha)=\theta'\times\theta\).
                        \end{itemize}
                \end{itemize}
                If \(\theta\) is not yet classified, then the largest multiplicatively indecomposable ordinal \(\theta_1\) less than \(\theta\) and the largest semi-multiplicatively indecomposable ordinal \(\theta_2\) not exceeding \(\theta\) are distinct. Then there exists a semi-multiplicatively indecomposable ordinal \(\theta_3\) such that \(\theta_1\times\theta_3=\theta_2\).
                \begin{enumerate}
                    \item If \(\theta_2\neq\theta\), then \(f(\alpha)=\theta_2\times\theta\).
                    \item Otherwise, we must have \(\theta_3\geq\theta_1\), and then \(f(\alpha)=\theta_3\times\theta\).
                \end{enumerate}

            \item If \(\beta>0\), decompose \(\beta=\gamma+\delta\) where \(\gamma\) is a limit ordinal and \(\delta\) is a natural number.
                \begin{itemize}
                    \item If \(\delta=0\), then \(f(\alpha)=f(\theta\times n)+\theta\times n\times\gamma\).
                    \item If \(\delta>0\), then
                    \[
                    f(\alpha)=f(\theta\times n)+\theta\times n\times\gamma+(\theta\times n+\gamma)\times 2\delta+(\delta-1).
                    \]
                \end{itemize}
        \end{enumerate}
\end{enumerate}

We proceed by induction on \(\alpha\) to show that this case distinction indeed defines the desired function \(f\) satisfying the given recursive relations. The cases where \(\alpha\) is a natural number or \(\omega\) are trivial.

Write \(\alpha\) in Cantor normal form:
\[
\alpha=\omega^{\alpha_1}\times k_1+\dots+\omega^{\alpha_m}\times k_m,\quad \alpha_1>0.
\]
Then \(\theta=\omega^{\alpha_1}\) and \(n=k_1\).
\begin{itemize}
    \item If \(\alpha_m=0\), then \(\gamma=\omega^{\alpha_2}\times k_2+\dots+\omega^{\alpha_{m-1}}\times k_{m-1}\) and \(\delta=k_m\).
    \item If \(\alpha_m>0\), then \(\gamma=\omega^{\alpha_2}\times k_2+\dots+\omega^{\alpha_m}\times k_m\) and \(\delta=0\).
\end{itemize}

We argue by the corresponding cases:
\begin{enumerate}
    \item When \(m=1\), this corresponds to \(\beta=0\). We distinguish four subcases based on \(\alpha_1\) and \(n\).
        \begin{enumerate}
            \item If \(\alpha_1=\alpha_1'+1\) is a successor ordinal and \(n>1\):
                \(\alpha=\omega^{\alpha_1}\times n\) is the limit of \(\omega^{\alpha_1}\times(n-1)+\omega^{\alpha_1'}\times k\) for \(k\in\omega\). The value of \(f\) at the latter is
                \[
                f(\omega^{\alpha_1}\times(n-1))+\omega^{\alpha_1}\times(n-1)\times\omega^{\alpha_1'}\times k,
                \]
                so the value at the former is its limit:
                \[
                f(\omega^{\alpha_1}\times(n-1))+\omega^{2\alpha_1}.
                \]
                \begin{itemize}
                    \item If \(n=2\), then
                    \[
                    f(\omega^{\alpha_1}\times(n-1))=f(\omega^{\alpha_1'+1})
                    =\omega^{\alpha_1'}\times\omega^{\alpha_1'+1}
                    =\omega^{\alpha_1'\cdot 2+1},
                    \]
                    so the total limit is \(\omega^{2\alpha_1}=\omega^{\alpha_1}\times\omega^{\alpha_1}\times(n-1)\).
                    \item If \(n>2\), then
                    \[
                    f(\omega^{\alpha_1}\times(n-1))=\omega^{\alpha_1}\times\omega^{\alpha_1}\times(n-2),
                    \]
                    and their sum is \(\omega^{\alpha_1}\times\omega^{\alpha_1}\times(n-1)\).
                \end{itemize}

            \item If \(\alpha_1\) is a limit ordinal and \(n>1\):
                Let \(\alpha_\xi\) be any cofinal sequence in \(\alpha_1\). Then \(\omega^{\alpha_1}\times n\) is the limit of \(\omega^{\alpha_1}\times(n-1)+\omega^{\alpha_\xi}\). The value of \(f\) at the latter is
                \[
                f(\omega^{\alpha_1}\times(n-1))+\omega^{\alpha_\xi},
                \]
                so the value at the former is its limit:
                \[
                f(\omega^{\alpha_1}\times(n-1))+\omega^{\alpha_1}.
                \]
                The rest follows similarly.

            \item If \(\alpha_1=\alpha_1'+1\) is a successor ordinal and \(n=1\):
                \(\omega^{\alpha_1}\) is the limit of \(\omega^{\alpha_1'}\times n\). The value of \(f\) at the latter is \(\omega^{\alpha_1'\cdot 2}\times(n-1)\), so the value at the former is its limit:
                \[
                \omega^{\alpha_1'\cdot 2+1}=\omega^{\alpha_1'}\times\omega^{\alpha_1}.
                \]

            \item If \(\alpha_1\) is a limit ordinal that is not additively indecomposable and \(n=1\):
                Decompose
                \[
                \alpha_1=\omega^{\alpha_1'}\times n'+\zeta+m,
                \]
                where \(\zeta<\omega^{\alpha_1'}\) is a limit ordinal and \(m\) is a natural number. Then
                \[
                \alpha=\omega^{\omega^{\alpha_1'}\times n'}\times\omega^\zeta\times\omega^m,
                \]
                \(\theta_1=\omega^{\omega^{\alpha_1'}}\), \(\theta_2=\omega^{\omega^{\alpha_1'}\times n'}\), \(\theta_3=\omega^{\omega^{\alpha_1'}\times(n'-1)}\).
                Let \(\theta_4=\omega^\zeta\) and \(\theta_5=\omega^m\).
                \begin{enumerate}
                    \item If \(m>0\):
                        \(\alpha\) is the limit of \(\omega^{\omega^{\alpha_1'}\times n'+\zeta+m-1}\times k\). The value of \(f\) at the latter is
                        \[
                        \omega^{\omega^{\alpha_1'}\cdot 2n'+\zeta+m-1}\times(k-1),
                        \]
                        so the value at the former is its limit:
                        \[
                        \omega^{\omega^{\alpha_1'}\cdot 2n'+\zeta+m}=\theta_2\times\theta.
                        \]

                    \item If \(m=0\) but \(\zeta>0\):
                        Let \(\zeta_\xi+m_\xi\) be a cofinal sequence in \(\zeta\) where each \(\zeta_\xi\) is a limit ordinal and each \(m_\xi>0\) is a natural number. Then \(\alpha=\theta\) is the limit of
                        \[
                        \omega^{\omega^{\alpha_1'}\times n'+\zeta_\xi+m_\xi},
                        \]
                        and the value of \(f\) at the latter is \(\theta_2\) times itself. Thus the value at the former is its limit: \(\theta_2\times\theta\).

                    \item If \(m=\zeta=0\), then \(n'\geq 2\). \(\alpha\) is the limit of
                        \[
                        \omega^{\omega^{\alpha_1'}\times(n'-1)+\xi}
                        \]
                        as \(\xi\to\omega^{\alpha_1'}\). The value of \(f\) at the latter is \(\theta_3\) times itself, so the value at the former is its limit: \(\theta_3\times\theta\).
                \end{enumerate}

            \item If \(\alpha_1\) is additively indecomposable and \(n=1\):
                Let \(\alpha_\xi+1\) be any cofinal sequence in \(\alpha_1\). Then \(\omega^{\alpha_1}\) is the limit of \(\omega^{\alpha_\xi+1}\). The value of \(f\) at the latter is
                \[
                \omega^{\alpha_\xi}\times\omega^{\alpha_\xi+1}=\omega^{\alpha_\xi\cdot 2+1},
                \]
                so the value at the former is its limit: \(\omega^\alpha\).
        \end{enumerate}

    \item When \(m>1\), this corresponds to \(\beta>0\). We handle two subcases.
        \begin{enumerate}
            \item If \(\alpha_m>0\) (i.e., \(\delta=0\)):
                Let \(\gamma_\xi+\delta_\xi\) be a cofinal sequence in \(\gamma\) where each \(\gamma_\xi\) is a limit ordinal and each \(\delta_\xi>0\) is a natural number. Then \(\alpha=\theta\times n+\gamma\) is the limit of
                \[
                \theta\times n+\gamma_\xi+\delta_\xi.
                \]
                The value of \(f\) at the latter is
                \[
                f(\theta\times n)+\theta\times n\times\gamma_\xi+(\theta\times n+\gamma_\xi)\times 2\delta_\xi+(\delta_\xi-2),
                \]
                so the value at the former is its limit:
                \[
                f(\theta\times n)+\theta\times n\times\gamma.
                \]

            \item If \(\alpha_m=0\) (i.e., \(\delta>0\)):
                We induct directly on \(\delta\), given that \(f(\theta\times n+\gamma)\) has already been computed. The conclusion is immediate.
        \end{enumerate}
\end{enumerate}

In summary, we have established the definability of \(f\). We now consider an arbitrary \(\operatorname{Pair}(\alpha,\beta)\).

If \(\alpha=\beta=\theta\), then \(\operatorname{Pair}(\alpha,\beta)=f(\theta)\) is already known. We distinguish the cases \(\alpha>\beta\) and \(\alpha<\beta\):
\begin{enumerate}
    \item If \(\alpha>\beta\), let \(\gamma\) be the least ordinal such that \(\beta+\gamma=\alpha\). Then \(\operatorname{Pair}(\alpha,\beta)\) is the unique ordinal \(\delta\) such that \(\delta+\gamma=f(\alpha)\).

    \item If \(\alpha<\beta\), let \(\gamma\) be the least ordinal such that \(\alpha+\gamma=\beta\). Then \(\operatorname{Pair}(\alpha,\beta)\) is the unique ordinal \(\delta\) such that \(\delta+(\gamma+\beta)=f(\beta)\).
\end{enumerate}

This completes the proof.
\end{proof}

\begin{theorem}
    $\mathsf{OA}$ proves that $\operatorname{Pair}$ is an increasing pairing function.
\end{theorem}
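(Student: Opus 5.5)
The plan is to work with the order-theoretic characterisation of $\operatorname{Pair}$ rather than the explicit case formula of the previous theorem. Let $\prec$ be the canonical well-order on pairs: compare $\max$ first, then lexicographically. I will show that the definable function $\operatorname{Pair}$ (built from $f$ and ordinal subtraction as in the previous theorem) satisfies two things inside $\mathsf{OA}$. First, it is \emph{strictly increasing}: $(\alpha,\beta)\prec(\alpha',\beta')$ implies $\operatorname{Pair}(\alpha,\beta)<\operatorname{Pair}(\alpha',\beta')$. Second, it is \emph{onto} $\mathbf{Ord}$. A strictly increasing surjection from a well-order onto $\mathbf{Ord}$ is automatically a bijection, and it is the collapse of $\prec$. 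So together these give ``increasing pairing function''. Monotonicity in each coordinate, and $\operatorname{Pair}(\alpha,\beta)\geq\max(\alpha,\beta)$, will then follow as corollaries.

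Step one is to establish in $\mathsf{OA}$ the basic ordinal arithmetic needed: associativity, left cancellation, left subtraction (for $\beta\leq\alpha$ there is a unique $\gamma$ with $\beta+\gamma=\alpha$), and right distributivity $\alpha\times(\beta+\gamma)=\alpha\times\beta+\alpha\times\gamma$. Each of these goes by transfinite induction, which $\mathsf{OA}$ supports through Comprehension plus least elements of classes. Replacement guarantees the $\operatorname{ssup}$'s exist.

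Step two is the diagonal function, using the previous theorem. Its proof shows, by induction on $\alpha$, that $f$ satisfies $f(0)=0$, $f(\alpha+1)=f(\alpha)+1+2\alpha$, and continuity at limits. I take these recursion equations as the working definition of $f$ and discard the case analysis.

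Step three treats a single block. Fix $\mu$ and look at the pairs with $\max=\mu$, in $\prec$-order:
\[(0,\mu),(1,\mu),\dots,(\gamma,\mu),\dots,(\mu,\mu),\dots,(\mu,\gamma),\dots.\]
This block has order type $\mu+1+\mu=1+2\mu$ when $\mu$ is infinite, and the finite computation $2\mu+1$ otherwise. I will show that $\operatorname{Pair}$ restricted to the block equals $\xi\mapsto f(\mu)+\xi$, composed with the order isomorphism of the block onto $f(\mu+1)-f(\mu)=1+2\mu$. Concretely, in the two clauses of the previous theorem, the ordinal $\delta$ defined by $\delta+\gamma=f(\alpha)$ resp.\ $\delta+(\gamma+\beta)=f(\beta)$ must be rewritten as $f(\mu)+(\text{position of the pair in its block})$. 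This is a left-subtraction computation using right cancellation facts $\delta+\gamma=\delta'+\gamma\Rightarrow$ ... (which fail in general for ordinals).

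This is the step I expect to be the main obstacle. Right cancellation is false for ordinal addition, so the uniqueness of $\delta$ asserted in the previous theorem needs care. I will first check whether the formula is actually consistent with $f(\mu)\leq\delta<f(\mu+1)$. If it is not, I will replace the definition by the direct one,
\[\operatorname{Pair}(\gamma,\mu)=f(\mu)+\gamma\quad(\gamma<\mu),\qquad \operatorname{Pair}(\mu,\gamma)=f(\mu)+\mu+\gamma,\]
which is still definable from $+,\times$, and then verify the two agree where the earlier clauses make sense.

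Step four assembles the blocks. Within a block, $\xi\mapsto f(\mu)+\xi$ is strictly increasing by left monotonicity of $+$. Across blocks, $\mu<\mu'$ gives $\operatorname{Pair}(\cdot,\mu)<f(\mu+1)\leq f(\mu')\leq\operatorname{Pair}(\cdot,\mu')$, using that $f$ is increasing and continuous. For surjectivity, given $\delta$, let $\mu$ be least with $\delta<f(\mu+1)$; continuity of $f$ and $f(0)=0$ give $f(\mu)\leq\delta$. The block of $\mu$ then covers the interval $[f(\mu),f(\mu+1))$, so $\delta$ is hit.
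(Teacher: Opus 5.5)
Your route differs from the paper's. The paper proves associativity in $\mathsf{OA}$ by transfinite induction, lists further normal-form and indecomposability facts meant to validate the closed-form case analysis of the previous theorem, and says the rest follows. You instead use the block decomposition of $\prec$ and only a recursion for the diagonal. Your suspicion about the last two clauses of that theorem is justified. With $f(\omega)=\omega$, every finite $\delta$ satisfies $\delta+\omega=f(\omega)$, while the canonical value of $\operatorname{Pair}(\omega,0)$ is $\omega\times 2$. So replacing those clauses by a direct formula is the right move.

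The genuine gap is in Step three. Under $\prec$ one has $(\mu,\gamma)\prec(\mu,\mu)$ for all $\gamma<\mu$, so the block is $(0,\mu),\dots,(\gamma,\mu),\dots,(\mu,0),\dots,(\mu,\gamma),\dots,(\mu,\mu)$, of type $\mu+\mu+1$. Your displayed list, with $(\mu,\mu)$ in the middle, is not lexicographic, and $\mu+1+\mu$ is the wrong type. This matters. A pairing increasing in both coordinates must put $(\mu,\mu)$ above every other pair of its block, so each block image has a largest element. But the recursion $f(\mu+1)=f(\mu)+1+2\mu$ makes $[f(\mu),f(\mu+1))$ of limit type $\mu+\mu$ for infinite $\mu$. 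Concretely:
\begin{itemize}
\item Reading your second clause for $\gamma\le\mu$ (the only way it covers the diagonal) gives $\operatorname{Pair}(\mu,\mu)=f(\mu)+\mu+\mu=f(\mu+1)=\operatorname{Pair}(0,\mu+1)$; both equal $\omega\times 3$ at $\mu=\omega$. So injectivity fails, and so does your Step four inequality $\operatorname{Pair}(\cdot,\mu)<f(\mu+1)$.
\item Following your displayed order instead makes $\operatorname{Pair}(\mu,\gamma)>\operatorname{Pair}(\mu,\mu)$ for $\gamma<\mu$, so the map is not increasing in the second coordinate.
\end{itemize}

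The root cause is that the recursion $f(\alpha+1)=f(\alpha)+1+2\alpha$ stated in the previous theorem is right only for finite $\alpha$. The function your argument needs is $g(\mu)=\operatorname{otp}\bigl(\{(\alpha,\beta)\mid\alpha,\beta<\mu\},\prec\bigr)=\operatorname{Pair}(0,\mu)$. It satisfies $g(0)=0$, $g(\mu+1)=g(\mu)+\mu+\mu+1$, and continuity at limits. For example, $g(\omega+1)=\omega\times 3+1$, whereas the case formula there gives $\omega\times 3$.

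So Step two cannot borrow $f$ from that theorem; you need your own $\mathsf{OA}$-definition of $g$. Taking the recursion as the definition is not automatic in a monadic theory that has no pairing yet, but it can be done. Code an approximation $h$ on $\mu+1$ by the class $\{(\mu+1)\times\beta+\alpha\mid\alpha\le\mu,\ h(\alpha)=\beta\}$. This class determines $h$ by left division with remainder, which is itself provable in $\mathsf{OA}$ by transfinite induction.

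With $g$ in place of $f$, use your direct formula: $\operatorname{Pair}(\gamma,\mu)=g(\mu)+\gamma$ for $\gamma<\mu$, and $\operatorname{Pair}(\mu,\gamma)=g(\mu)+\mu+\gamma$ for $\gamma\le\mu$. This maps each block isomorphically onto $[g(\mu),g(\mu+1))$, and Step four goes through as written. You then have a proof that bypasses the closed form and its normal-form lemmas entirely.
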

\begin{proof}
    Comprehension and well-foundedness allow us to prove formulae by transfinite induction.
    We first prove that $\forall\alpha\forall\beta\forall\gamma(\alpha+(\beta+\gamma)=(\alpha+\beta)+\gamma)$: put induction on $\gamma$, $(\alpha+\beta)+\gamma=\operatorname{ssup}\{(\alpha+\beta)+\delta\mid \delta<\gamma\}$, and $\alpha+(\beta+\gamma)=\alpha+\operatorname{ssup}\{\beta+\delta\mid\delta<\gamma\}=\operatorname{ssup}\{\alpha+\xi\mid\xi\leq\beta+\delta,\delta<\gamma\}=\operatorname{ssup}\{\alpha+\xi\mid\xi=\beta+\delta,\delta<\gamma\}=\operatorname{ssup\{\alpha+(\beta+\delta)\mid \delta<\gamma\}}$.
    Now $\alpha+(\beta+\delta)=(\alpha+\beta)+\delta$ follows from the induction hypothesis, and we have $\{(\alpha+\beta)+\delta\mid \delta<\gamma\}=\{\alpha+(\beta+\delta)\mid\delta<\gamma\}$, so they have the same $\operatorname{ssup}$, which is both $\alpha+(\beta+\gamma)$ and $(\alpha+\beta)+\gamma$.

    The same technique can be used to prove the following results.
    \begin{enumerate}
        \item $\forall\alpha,\beta,\gamma(\alpha\times(\beta+\gamma)=\alpha\times\beta+\alpha\times\gamma)$.
        \item $\alpha\times(\beta\times\gamma)=(\alpha\times\beta)\times\gamma$.
        \item If $\beta<\alpha$, then there is a unique $\gamma$ such that $\beta+\gamma=\alpha$.
        \item If $\alpha$ is a limit of limit ordinals, then $\alpha$ is itself a limit ordinal.
        \item Every ordinal $\alpha$ is equivalent to $\beta+n$, where $\beta$ is the largest limit ordinal $\leq\alpha$ and $n$ is a natural number.
        \item If $\alpha$ is a limit of additively indecomposable ordinals, then $\alpha$ is itself an additively indecomposable ordinal.
        \item If $\alpha$ is an additively indecomposable ordinal, then $\alpha\times\omega$ is also an additively indecomposable ordinal.
        \item Every ordinal $\alpha$ is equivalent to $\beta\times n+\gamma$, where $\beta$ is the largest additively indecomposable ordinal $\leq\alpha$, $n$ is a natural number, and $\gamma<\beta$.
        \item If $\beta<\alpha$ are two additively indecomposable ordinals, there is a unique additively indecomposable ordinal $\gamma$ such that $\beta\times\gamma=\alpha$.
        \item If $\alpha$ is a limit of multiplicatively indecomposable ordinals, then $\alpha$ is itself a multiplicatively indecomposable ordinal.
        \item If $\alpha$ is a semi-multiplicatively indecomposable ordinal and $\beta$ is the largest multiplicatively indecomposable ordinal $\leq\alpha$, then $\alpha\times\beta$ is also a semi-multiplicatively indecomposable ordinal, and $\beta$ is still the largest multiplicatively indecomposable ordinal $\leq\alpha\times\beta$.
        \item If $\alpha$ is a limit of semi-multiplicatively indecomposable ordinals, then $\alpha$ is itself a semi-multiplicatively indecomposable ordinal.
        \item Every additively indecomposable ordinal $\alpha$ is equivalent to $\beta\times\gamma$, where $\beta$ is the largest semi-multiplicatively indecomposable ordinal $\leq\alpha$, and $\gamma<\delta$, with $\delta$ being the largest multiplicatively indecomposable ordinal $\leq\beta$.
    \end{enumerate}
    The rest of the proof follows easily.
\end{proof}

\section{A definition of the first-order truth predicate}

We work in the monadic second‑order theory of ordinals $\mathsf{OA}$ described in Section 3.
The first‑order variables range over ordinals $\alpha,\beta,\dots$, the second‑order variables range over classes $X,Y,\dots$ of ordinals, and the language contains the symbols $<,+,\times$ on ordinals together with the membership relation $\in$ between ordinals and classes.
Fix a finite tuple of class parameters $\vec{P}=P_1,\dots,P_m$. These parameters are given once and for all and will occur as second‑order free variables in the definition of the truth predicate.
We aim to define, for every first‑order formula $\varphi$ (with first‑order free variables among $v_1,v_2,\dots$ and class parameters among $\vec{P}$), a predicate
\[
\operatorname{Sat}(\ulcorner\varphi\urcorner,x_1,\dots,x_k)
\]
expressible in the second‑order language with the same parameters $\vec{P}$, such that the Tarskian equivalence
\[
\operatorname{Sat}(\ulcorner\varphi\urcorner,x_1,\dots,x_k)\;\longleftrightarrow\;\varphi(x_1,\dots,x_k,\vec{P})
\]
is provable in $\mathsf{OA}$.
The construction uses only the ordinal arithmetic of $\mathsf{OA}$ and the second‑order comprehension schema; in particular it does not depend on the choice of a regular encoding of any set‑theoretic universe.

Because $\mathsf{OA}$ contains the axiom of infinity, the smallest infinite ordinal $\omega$ is definable as the class of all finite ordinals:
\[
\operatorname{Nat}(\alpha)\;:=\;
\forall\beta{<}\alpha\,(\beta=0\lor\exists\gamma{<}\beta\,\forall\delta{<}\beta\,(\delta\leq\gamma))\;\land\;
\exists\beta{<}\alpha\,\forall\gamma{<}\alpha\,(\gamma\leq\beta).
\]
On $\omega$ we have the usual operations of addition and multiplication, which coincide with the restrictions of $+$ and $\times$.  
Using the class comprehension schema, every arithmetical predicate on $\omega$ is definable as a class; in particular we can carry out the primitive recursive definitions that follow.

By the results of Appendix A, there is a definable Gödel pairing function $\operatorname{Pair}(\alpha,\beta)$ that is strictly increasing in both arguments and yields a bijection between $\mathbf{Ord}$ and $\mathbf{Ord}\times\mathbf{Ord}$.  
We encode finite sequences of ordinals by iterating $\operatorname{Pair}$:
\[
\begin{aligned}
\langle \alpha_0\rangle &:= \alpha_0,\\
\langle \alpha_0,\alpha_1,\dots,\alpha_k\rangle &:=
\operatorname{Pair}(\alpha_0,\langle \alpha_1,\dots,\alpha_k\rangle).
\end{aligned}
\]
For a sequence $s=\langle \alpha_0,\dots,\alpha_{k-1}\rangle$ we write $\operatorname{len}(s)=k$ and $(s)_i=\alpha_i$ for $i<k$.  
Both the relation “$s$ is a sequence of length $k$” and the projection $(s)_i$ are definable by first‑order formulas over $\langle <,+,\times\rangle$, uniformly in $i,k$.  
We also fix a definable function $s*\alpha$ that appends $\alpha$ to the sequence $s$; it can be taken as $\operatorname{Pair}(s,\alpha)$ together with a suitable marker for the new length.

Every assignment of values to finitely many variables will be represented by a sequence.  
If $a$ is a sequence of length $\ell$, we write $a(i)$ for the value of the variable $v_i$ in $a$:
\[
a(i) := \begin{cases}
(s)_j &\text{if there exists }j<\ell\text{ such that }(s)_j\text{ codes the pair }(i,\alpha),\\
0 &\text{otherwise.}
\end{cases}
\]
For technical simplicity we adopt a slightly different representation that is better suited for the recursive definitions to come.  
Let a \emph{numbered assignment} be a sequence
\[
a = \langle i_1,\alpha_1,\; i_2,\alpha_2,\; \dots,\; i_r,\alpha_r\rangle
\]
where $i_1<i_2<\dots <i_r$ are variable indices and $\alpha_1,\dots,\alpha_r$ are the corresponding ordinals.  
We say $a$ is \emph{adequate for} a formula $\varphi$ if every free variable of $\varphi$ occurs among the $i_j$'s.  
The value $\operatorname{val}(a,i)$ is defined to be $\alpha_j$ if $i=i_j$ for some $j$, and $0$ otherwise.  
If $a$ is adequate for $\varphi$ and $x_1,\dots,x_k$ are ordinals, we write $a[v_{i_1}/x_1,\dots,v_{i_k}/x_k]$ for the assignment obtained by updating the values of $v_{i_1},\dots,v_{i_k}$ to $x_1,\dots,x_k$ (inserting new pairs if necessary).  
All these manipulations are primitive recursive in the codes and therefore definable in $\mathsf{OA}$.

The first‑order language $\mathcal{L}$ contains:
\begin{itemize}
\item logical symbols $\neg,\land,\exists$;
\item the binary relation symbols $<,=, +, \times$;
\item for each $j=1,\dots,m$ a unary predicate symbol $\dot{P}_j$ (intended to be interpreted by the class $P_j$);
\item variables $v_1,v_2,\dots$ (we use $v_i$ with $i\in\omega\setminus\{0\}$ for simplicity).
\end{itemize}
We assign distinct natural numbers to the primitive symbols:
\[
\begin{array}{c|c}
\text{symbol} & \text{code} \\ \hline
\neg & 0 \\
\land & 1 \\
\exists & 2 \\
< & 3 \\
= & 4 \\
+ & 5 \\
\times & 6 \\
v_i\;(i\geq 1) & 7+i \\
\dot{P}_j\;(1\leq j\leq m) & 7+\omega+j
\end{array}
\]
Finite strings of symbols are then coded as sequences of natural numbers, which in turn are coded as single ordinals via the pairing function.  
We write $\ulcorner\tau\urcorner$ for the code of a term $\tau$ and $\ulcorner\varphi\urcorner$ for the code of a formula $\varphi$.  
The following syntactic predicates and functions are primitive recursive in the codes and therefore definable over $\omega$ by arithmetical formulas (hence definable as classes in $\mathsf{OA}$):
\begin{itemize}
\item $\operatorname{Var}(n)$ : $n$ codes a variable $v_i$.
\item $\operatorname{Term}(n)$ : $n$ codes a term (since there are no function symbols besides the variables, this simply means $n$ codes a variable).
\item $\operatorname{AtFmla}(n)$ : $n$ codes an atomic formula, i.e., one of the forms $v_i<v_j$, $v_i=v_j$, $v_i+v_j=v_k$, $v_i\times v_j=v_k$, or $v_i\in\dot{P}_j$.
\item $\operatorname{Fmla}(n)$ : $n$ codes a first‑order formula, defined by the usual closure under $\neg,\land,\exists$.
\item $\operatorname{Neg}(n)$ : the code of $\neg\varphi$ given $\ulcorner\varphi\urcorner$.
\item $\operatorname{Conj}(n,m)$ : the code of $\varphi\land\psi$.
\item $\operatorname{Ex}(i,n)$ : the code of $\exists v_i\,\varphi$.
\item $\operatorname{FreeVar}(n)$ : the (finite) set of free variable indices of the formula coded by $n$.
\item $\operatorname{Subst}(n,i,\ulcorner\tau\urcorner)$ : the code of the formula obtained by substituting term $\tau$ for $v_i$; in our restricted language terms are only variables, so substitution is just renaming.
\end{itemize}
We shall use these predicates freely, knowing that they can be written as formulas in the language of $\mathsf{OA}$.

Let $a$ be a numbered assignment and $n$ an atomic formula code.  
We define the formula $\operatorname{AtSat}(n,a)$ (with free second‑order parameters $\vec{P}$) by cases:
\[
\operatorname{AtSat}(n,a) := 
\begin{cases}
\operatorname{val}(a,i) < \operatorname{val}(a,j), & \text{if } n = \ulcorner v_i < v_j\urcorner,\\[2pt]
\operatorname{val}(a,i) = \operatorname{val}(a,j), & \text{if } n = \ulcorner v_i = v_j\urcorner,\\[2pt]
\operatorname{val}(a,i) + \operatorname{val}(a,j) = \operatorname{val}(a,k), & \text{if } n = \ulcorner v_i + v_j = v_k\urcorner,\\[2pt]
\operatorname{val}(a,i) \times \operatorname{val}(a,j) = \operatorname{val}(a,k), & \text{if } n = \ulcorner v_i \times v_j = v_k\urcorner,\\[2pt]
\operatorname{val}(a,i) \in P_j, & \text{if } n = \ulcorner v_i \in \dot{P}_j\urcorner,\\[2pt]
\text{false}, & \text{otherwise (ill‑formed code)}.
\end{cases}
\]
Because $<$, $+$, $\times$ and the membership in the classes $P_j$ are directly expressible in the second‑order language, $\operatorname{AtSat}(n,a)$ is a legitimate formula of our second‑order language with parameters $\vec{P}$.

A \emph{satisfaction class} (relative to $\vec{P}$) is a second‑order object $S\subseteq\mathbf{Ord}$ that satisfies the Tarskian inductive conditions.  
Formally we define the formula $\operatorname{SatClass}(S)$ as the conjunction of the following four statements:

\begin{enumerate}
\item \textbf{Atomic.}
$\forall n\,\forall a\;\bigl(\operatorname{AtFmla}(n)\;\to\;
\bigl(\operatorname{Pair}(n,a)\in S \;\leftrightarrow\;\operatorname{AtSat}(n,a)\bigr)\bigr).$

\item \textbf{Negation.}
$\forall n\,\forall a\;\bigl(\operatorname{Fmla}(n)\;\to\;
\bigl(\operatorname{Pair}(\operatorname{Neg}(n),a)\in S \;\leftrightarrow\;
\operatorname{Pair}(n,a)\notin S\bigr)\bigr).$

\item \textbf{Conjunction.}
$\forall n,m\,\forall a\;\bigl(\operatorname{Fmla}(n)\land\operatorname{Fmla}(m)\;\to\;
\bigl(\operatorname{Pair}(\operatorname{Conj}(n,m),a)\in S \;\leftrightarrow\;
(\operatorname{Pair}(n,a)\in S\land\operatorname{Pair}(m,a)\in S)\bigr)\bigr).$

\item \textbf{Existential quantification.}
$\forall i\,\forall n\,\forall a\;\bigl(\operatorname{Fmla}(n)\;\to\;
\bigl(\operatorname{Pair}(\operatorname{Ex}(i,n),a)\in S \;\leftrightarrow\;
\exists y\;\operatorname{Pair}(n,a[v_i/y])\in S\bigr)\bigr).$
\end{enumerate}
Here $a[v_i/y]$ denotes the numbered assignment obtained from $a$ by updating the value for $v_i$ to $y$ (if $v_i$ is not already present, the pair $(i,y)$ is appended).  
This operation is primitive recursive in the codes, hence definable.

Because the second‑order comprehension schema of $\mathsf{OA}$ guarantees the existence of classes defined by arbitrary formulas, the expression “$\exists S\,\operatorname{SatClass}(S)$” is a well‑formed formula of our second‑order language (with parameters $\vec{P}$).

Now we can define the truth predicate.  
Let $\varphi$ be a first‑order formula with free variables among $v_{i_1},\dots,v_{i_k}$ (listed in increasing order of indices) and class parameters among $\vec{P}$.  
For arbitrary ordinals $x_1,\dots,x_k$ we set:
\[
\operatorname{Sat}(\ulcorner\varphi\urcorner,x_1,\dots,x_k) \;:=\;
\exists S\;\Bigl(\operatorname{SatClass}(S)\;\land\;
\operatorname{Pair}\bigl(\ulcorner\varphi\urcorner,\;
a_0[v_{i_1}/x_1,\dots,v_{i_k}/x_k]\bigr)\in S\Bigr),
\]
where $a_0$ is any fixed empty assignment (e.g., the code of the empty sequence).  
Because $a_0[v_{i_1}/x_1,\dots,v_{i_k}/x_k]$ depends definably on $x_1,\dots,x_k$, the right‑hand side is a formula of the monadic second‑order language with parameters $\vec{P}$.

\begin{theorem}[Tarski’s equivalence]
For every first‑order formula $\varphi(v_{i_1},\dots,v_{i_k},\vec{P})$ the following is provable in $\mathsf{OA}$:
\[
\operatorname{Sat}(\ulcorner\varphi\urcorner,x_1,\dots,x_k)\;\longleftrightarrow\;
\varphi(x_1,\dots,x_k,\vec{P}).
\]
\end{theorem}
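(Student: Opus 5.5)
The plan is to split the biconditional into two parts. The first is a single internal existence result: $\mathsf{OA}\vdash\exists S\,\operatorname{SatClass}(S)$. The second is a metatheoretic induction on the standard formula $\varphi$ showing that \emph{every} satisfaction class is correct on $\varphi$. Concretely, for each standard formula $\psi$ with free variables among $v_{j_1},\dots,v_{j_r}$, I will prove in $\mathsf{OA}$
\[
\forall S\,\Bigl(\operatorname{SatClass}(S)\to\forall a\,\bigl(a\text{ adequate for }\psi\to(\operatorname{Pair}(\ulcorner\psi\urcorner,a)\in S\leftrightarrow\psi(\operatorname{val}(a,j_1),\dots,\operatorname{val}(a,j_r),\vec P))\bigr)\Bigr).
\]
Given this and the existence result, both directions of Tarski's equivalence follow at once. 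For ($\to$), take the witness $S$ from the definition of $\operatorname{Sat}$ and apply the claim to the assignment $a_0[v_{i_1}/x_1,\dots,v_{i_k}/x_k]$. For ($\leftarrow$), take any $S$ with $\operatorname{SatClass}(S)$ and apply the claim in the other direction. The value lemma $\operatorname{val}(a_0[v_{i_1}/x_1,\dots],i_j)=x_j$ is a routine fact about the definable sequence coding.

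The metainduction on $\psi$ runs through the four clauses of $\operatorname{SatClass}$.
\begin{itemize}
\item \textbf{Atomic case.} This is the atomic clause together with the case-by-case definition of $\operatorname{AtSat}$; here $\ulcorner\psi\urcorner$ is a standard numeral, so $\operatorname{AtFmla}(\ulcorner\psi\urcorner)$ is provable.
\item \textbf{Negation and conjunction.} These follow immediately from the corresponding clauses and the induction hypothesis, using that an assignment adequate for $\neg\psi$ or $\psi\land\chi$ is adequate for the components.
\item \textbf{Existential case $\exists v_i\,\psi$.} The clause gives $\exists y\,\operatorname{Pair}(\ulcorner\psi\urcorner,a[v_i/y])\in S$. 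The induction hypothesis applied to $a[v_i/y]$, which is adequate for $\psi$, turns this into $\exists y\,\psi(\dots,y,\dots)$. This step needs the lemmas $\operatorname{val}(a[v_i/y],i)=y$ and $\operatorname{val}(a[v_i/y],j)=\operatorname{val}(a,j)$ for $j\neq i$, both provable from the arithmetic of $\operatorname{Pair}$ (Appendix A).
\end{itemize}

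The main obstacle is the existence of a satisfaction class. Here the universe quantified over is all of $\mathbf{Ord}$, while formula codes range over $\omega$, which may contain nonstandard codes in a general model. Consequently $S$ cannot be given by a single formula of fixed complexity, and I will build it by internal recursion on formula depth. In $\mathsf{OA}$, say that $T$ is a \emph{$k$-stage} if its sections $T_0,\dots,T_k$ satisfy the four Tarski clauses for all formula codes of depth $\le m$ placed in $T_m$, and are coherent, i.e.\ $T_m$ and $T_{m'}$ agree on codes of depth $\le\min(m,m')$.

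I will first show by induction on $k\in\omega$ that a $k$-stage exists and that any two $k$-stages agree on codes of depth $\le k$. The induction is internal: by comprehension, the set of $k<\omega$ for which this fails is a class, so by well-ordering it has a least element if nonempty. The successor step defines $T_{k+1}$ from $T_k$ by comprehension, applying the Tarski clauses one more level with the ordinal quantifier $\exists y$ unrestricted.

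Finally I will set
\[
S=\{\operatorname{Pair}(n,a)\mid \exists k\,\exists T\,(T\text{ is a }k\text{-stage}\wedge \operatorname{depth}(n)\le k\wedge \operatorname{Pair}(n,a)\in T_k)\},
\]
which is a class by impredicative comprehension. Uniqueness of stages makes membership independent of the chosen $k$ and $T$, so $\operatorname{SatClass}(S)$ follows clause by clause. The delicate points are keeping the depth bookkeeping consistent under $\operatorname{Neg}$, $\operatorname{Conj}$, $\operatorname{Ex}$, and checking that the induction on $k$ legitimately uses class quantifiers, which full comprehension in $\mathsf{OA}$ permits.
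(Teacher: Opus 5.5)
Your proof is correct. Its second half, the metainduction showing that every satisfaction class is correct on each standard $\psi$, is essentially the paper's ``uniqueness on subformulas'' step. The difference there is that you compare $S$ directly with truth rather than with the intended class $S_0$, which is cleaner.

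The genuine difference is the existence half. The paper argues locally. For the fixed standard $\varphi$ it forms, by one instance of comprehension, the class $S_0$ of pairs $\operatorname{Pair}(\ulcorner\psi\urcorner,a)$ with $\psi$ a subformula of $\varphi$ and $\psi[a]$ true. It then asserts that $S_0$ can be ``extended arbitrarily'' to a full satisfaction class.

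As written, that step fails, because $\operatorname{SatClass}$ imposes the Tarski clauses on every formula code. For instance, adding all remaining pairs puts both $\operatorname{Pair}(\ulcorner\chi\urcorner,a)$ and $\operatorname{Pair}(\ulcorner\neg\chi\urcorner,a)$ into $S$ for any $\chi$ outside the subformula set. The paper's local route would only work if $\operatorname{Sat}$ were defined through partial satisfaction classes restricted to formulas of bounded depth, which would make the definition non-uniform in $\varphi$.

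Your construction builds a single full satisfaction class from coherent, provably unique $k$-stages. That is exactly what the $(\leftarrow)$ direction needs with $\operatorname{Sat}$ as actually defined, and it also works in models with nonstandard formula codes. The cost is that both the internal induction on $k$ and the final definition of $S$ need comprehension for formulas with class quantifiers, which the unrestricted comprehension of $\mathsf{OA}$ supplies.

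A minor bookkeeping point: with the paper's coding, $\dot P_j$ receives the code $7+\omega+j=\omega+j$, so formula codes are not literally natural numbers. Your argument only needs a definable depth function into $\omega$ with the expected recursive properties, so nothing changes.
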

\begin{proof}[Proof sketch]
The proof proceeds by induction on the complexity of $\varphi$.  
A more general claim is proved: for every $\varphi$ and every numbered assignment $a$ adequate for $\varphi$,
\[
\exists S\,\bigl(\operatorname{SatClass}(S)\land \operatorname{Pair}(\ulcorner\varphi\urcorner,a)\in S\bigr)
\;\longleftrightarrow\; \varphi[a],
\]
where $\varphi[a]$ is the result of replacing each free variable $v_i$ in $\varphi$ by the ordinal $\operatorname{val}(a,i)$ and each $\dot{P}_j$ by the class $P_j$.

\textbf{Existence.}
By the second‑order comprehension schema we can form the class
\[
S_0 := \{\operatorname{Pair}(\ulcorner\psi\urcorner,a) \mid
\psi\text{ a subformula of }\varphi,\; a\text{ adequate},\; \psi[a]\text{ true}\}.
\]
One verifies by induction on $\psi$ that $S_0$ satisfies the four clauses of $\operatorname{SatClass}$ when restricted to subformulas of $\varphi$.  
Extending $S_0$ arbitrarily (e.g., by putting all other pairs not in $S_0$) yields a full satisfaction class $S$ that coincides with $S_0$ on the subformulas of $\varphi$, because the truth of a formula depends only on the truth of its proper subformulas.  
Hence the left‑hand side of the equivalence holds if $\varphi[a]$ is true.

\textbf{Uniqueness on subformulas.}
If $S_1$ and $S_2$ are two satisfaction classes, a straightforward induction on $\psi$ shows that
\[
\operatorname{Pair}(\ulcorner\psi\urcorner,a)\in S_1\;\longleftrightarrow\;
\operatorname{Pair}(\ulcorner\psi\urcorner,a)\in S_2
\]
for every $\psi$ that is a subformula of $\varphi$ and every assignment $a$.  
The atomic clause fixes the values of atomic formulas, and the remaining clauses force the agreement to propagate under $\neg,\land,\exists$.  
Consequently, if there exists \emph{any} satisfaction class containing $\operatorname{Pair}(\ulcorner\varphi\urcorner,a)$, then $\varphi[a]$ must be true, because the “intended” satisfaction class $S_0$ built from the true subformulas must contain that pair as well.

Together, the two directions yield the required equivalence.  
The induction is completely constructive and can be formalized in $\mathsf{OA}$ using the induction principle on the well‑founded subformula relation, which is available because the formulas are coded by natural numbers and the subformula relation is a well‑founded relation on $\omega$.
\end{proof}

Thus, we have obtained a truth predicate $\operatorname{Sat}$ in monadic second‑order logic for all first‑order formulas, with the required class parameters $\vec{P}$ fixed.

\bibliographystyle{plain}
\bibliography{main}

\end{document}